\renewcommand{\phi}{\varphi}
\newcommand{\Z}{{\mathbb Z}}
\newcommand{\N}{{\mathbb N}}
\newcounter{step}
\theoremstyle{plain}
\newtheorem{lemma}{Lemma}
\newtheorem{theorem}{Theorem}
\newtheorem{corollary}{Corollary}
\theoremstyle{definition}
\newcommand{\reft}[1]{~\ref{t:#1}}
\newcommand{\refe}[1]{~\eqref{e:#1}}
\begin{document}

\title[Inverse problems in group theory]
{Inverse problems in Additive Number Theory\\ and in Non-Abelian Group
Theory}

\author{G. A. Freiman}
\address{Gregory A. Freiman\vspace{-0.25cm}}
\address{School of Mathematical Sciences, Tel Aviv University,
Tel Aviv 69978, Israel.}
\email{~grisha@post.tau.ac.il}

\author{M. Herzog}
\address{Marcel Herzog\vspace{-0.25cm}}
\address{School of Mathematical Sciences, Tel Aviv University,
Tel Aviv 69978, Israel.}
\email{~herzogm@post.tau.ac.il}

\author{P. Longobardi}
\address{P. Longobardi\vspace{-0.25cm}}
\address{Dipartimento di Matematica e Informatica, Universita di Salerno, 84084 Fisciano (Salerno), Italy}
\email{~plongobardi@unisa.it}

\author{M. Maj}
\address{M. Maj\vspace{-0.25cm}}
\address{Dipartimento di Matematica e Informatica, Universita di Salerno, 84084 Fisciano (Salerno), Italy}
\email{~mmaj@unisa.it}

\author{Y. V. Stanchescu}
\address{Yonutz V. Stanchescu\vspace{-0.25cm}}
\address{The Open University of Israel, Raanana 43107, Israel
 \newline \normalfont{${\quad}\quad$ and}\vspace{-0.25cm}}
\address{Afeka Academic College, Tel Aviv 69107, Israel.}
\email{ionut@openu.ac.il ~and ~yonis@afeka.ac.il}
\thanks{{\it Keywords}: Inverse additive number theory; Minkowski sums of dilates;
Baumslag-Solitar groups and monoids, inverse non-abelian group theory, extended inverse problems}
\thanks{{\it Mathematics Subject Classification} 2010: Primary 11P70; Secondary 11B30, 20F05, 20F99, 11B13, 05E99.}



\maketitle

\section{Introduction}
The aim of this paper is threefold:

\noindent
a) Finding new direct and inverse results in the additive number theory concerning
Minkowski sums of dilates.

\noindent
b) Finding a connection between the above results and some direct and inverse problems
in the theory of Baumslag-Solitar (non-abelian) groups.

\noindent
c) Solving certain inverse problems in  Baumslag-Solitar groups or monoids, assuming
appropriate small doubling properties. 
\medskip

We start with our first topic (a), concerning the additive number theory.
In this paper $\Z$ denotes the rational integers, $\N$
denotes the
{\bf non-negative} elements of $\Z$ and the size of a finite set $A$ will be
denoted by $|A|$.
Subsets of $\Z$ of the form
$$
r \ast A = \{rx : x\in A\},
$$
where $r$ is a positive integer and $A$ is a {\bf finite} subsets of $\Z$,
are called {\it $r$-dilates}.
 
Minkowski sums of dilates are defined as follows:
$$
r_1 \ast A +...+ r_s \ast A = \{r_1x_1+...+r_sx_s : x_i \in A,\  1 \le i \le
s\}, 
$$
These sums have been recently studied in different situations by
Nathanson, Bukh,  Cilleruelo, Silva, Vinuesa,  Hamidoune, Serra and Ru{\'e}
(see \cite{b:nat}, \cite{b:bukh}, \cite{b:cill-2}, \cite{b:cill-1},
\cite{b:ham}). In particular, they examined
sums of two dilates of the form
$$
A+ r \ast A=\{a+rb\mid \ a,b\in A\}
$$
and solved various {\it direct} and {\it inverse} problems concerning their sizes.

For example, it was shown in \cite{b:cill-2} that
$$|A+2 \ast A|\geq 3|A|-2,$$
which represents a {\it direct} result. Moreover, they solved the following
{\it inverse} problem:
what is the {\it structure} of the set $A$ if $$|A+2 \ast A|= 3|A|-2?$$ 
Their answer was that
in such case $A$ must be an arithmetic progression. 

Inverse problems of this type,
where the exact bound is assumed,
will be called {\it ordinary inverse problems}. The term {\it
extended inverse problem} will refer to inverse problems in which a small 
diversion from the exact bound is allowed, still enabling  us to reach a definite
conclusion concerning the {\it structure} of $A$. 

As an example of an 
extended inverse problem, consider the following question: what is the structure of the set $A$ 
if 
$$|A+2 \ast A|<  4|A|-4?$$
Our answer to this question is:

{\it {\bf (A).} If $|A+2 \ast A|<  4|A|-4$, then $A$ is a {\bf subset} 
of an arithmentic progression of size $\leq 2|A|-3$.}
(see Theorem \reft{inverse-integer}, Section 3)

\medskip

The above mentioned authors and others studied also
the sums $A+ r \ast A$ for $r\geq 3$. In this direction we proved the following
new (direct) result:
 
{\it {\bf (B).} If $r\ge 3$, then} $|A+ r \ast A|\ge 4|A|-4$.
(see Theorem \reft{A+kA}, Section 5) 

This very useful result yields a {\bf uniform} bound for all sets $A$
and for $r\ge 3$. In the literature, most bounds of this type are asymptotic.   

\medskip

We continue now with the second topic (b), dealing with a connection, noticed by us, between
results concerning sums of dilates and some problems in the theory of Baumslag-Solitar groups. 

If $S$ and $T$ are subsets of a group $G$, their {\it product} is defined
as follows:
$$ST=\{st\mid \ s\in S,\ t\in T\}.$$
In particular, $S^2=\{s_1s_2\mid \ s_1,s_2\in S\}$ and
if $b\in G$, then $bS=\{bs\mid \ s\in S\}$.

For integers $m$ and $n$, the general Baumslag-Solitar group 
$BS(m,n)$ is a group with two generators
$a, b$ and one defining relation $b^{-1}a^mb=a^n$:
$$BS(m,n)=\langle a,b\mid \ a^mb=ba^n\rangle.$$
We shall concentrate on
$$G=BS(1,n)=\langle a,b\mid \ ab=ba^n\rangle.$$

\medskip

Let $S$ be a finite subset of $G$ of size $k_1$ contained in the coset 
$b^r\langle a\rangle$ for some $r\in \N$ and let $T$ be a finite subset of $G$ of size $k_2$
contained in the coset
$b^s\langle a\rangle$ for some $s\in \N$. 
Then 
$$S=\{b^ra^{x_0},b^ra^{x_1},\dots,b^ra^{x_{k_1-1}}\},$$
where $A=\{x_0,x_1,\dots,x_{k_1-1}\}$ is a subset of $\Z$. We introduce
now the notation
$$S=\{b^ra^x: x\in A\}=b^ra^A.$$
Thus $|S|=|A|$.

Similarly, $T=b^sa^B$ for some subset $B=\{y_0,y_1,\dots,y_{k_2-1}\}$ of $\Z$.
Since $ab=ba^n$, it follows that $a^{-1}b=ba^{-n}$ and 
\begin{align}\label{e:r-product}
a^xb^t=b^ta^{n^tx}\qquad \text{for each}\qquad x\in \Z\quad 
\text{and}\quad t\in \N.
\end{align}
In particular,
$$a^xb=ba^{nx}\qquad \text{for each}\qquad x\in \Z.$$
Equation (\ref{e:r-product}) implies that 
$$(b^ra^x)(b^sa^y)=b^r(a^xb^s)a^y=b^r(b^sa^{n^sx})a^y=b^{r+s}a^{n^sx+y}$$
for each $x,y\in \Z$ and for each $r,s\in \N$.
Therefore the {\it product set}
$$ST=\{vw\mid \ v\in S,\ w\in T\}$$
can be written as
\begin{align}\label{e:product}
ST&=\{(b^ra^{x_i})(b^sa^{y_j})\mid\ i\in \{0,1,\dots,k_1-1\},\ j\in \{0,1,\dots,k_2-1\}\}\\
&=\{b^{r+s}a^{n^sx_i+y_j}
\mid \ i\in \{0,1,\dots,k-1\}\},\ j\in \{0,1,\dots,k_2-1\}\} =b^{r+s}a^{n^s\ast A+B}\notag
\end{align}
and $|ST|=|n^s\ast A+B|$.

\medskip

We have proved the following {\bf basic} theorem.

\begin{theorem}\label{t:basic-dilates}
Suppose that 
$$S=b^ra^A\subseteq BS(1,n),\ T=b^sa^B\subseteq BS(1,n)$$
where $r,s \in \N$ and $A,B$ are finite subsets of $\Z$. Then  
$$ST=b^{r+s}a^{n^s\ast A+B}$$
and 
$$|ST|=|n^s\ast A+B|.$$
In particular,
$$S^2=b^{2r}a^{n^r\ast A+A}$$
and
$$|S^2|=|n^r\ast A+A|.$$
\end{theorem}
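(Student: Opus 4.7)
The plan is to observe that essentially all the work has been done in the displayed computations leading up to the statement; the proof amounts to collecting those computations, justifying the one induction they rely on, and verifying the cardinality claim.

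First I would establish the commutation identity \refe{r-product}, namely $a^x b^t = b^t a^{n^t x}$ for $x \in \Z$ and $t \in \N$. The base case $t=0$ is trivial. For $t=1$, iterating the defining relation $ab = ba^n$ gives $a^x b = b a^{nx}$ for every $x \in \Z$ (treating positive and negative $x$ separately, using $a^{-1} b = b a^{-n}$, which follows by multiplying $ab = ba^n$ on the left by $a^{-1}$ and on the right by $a^{-n}$). The induction step $t \to t+1$ is then immediate: $a^x b^{t+1} = (a^x b^t) b = b^t a^{n^t x} b = b^{t+1} a^{n^{t+1} x}$.

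Next I would use \refe{r-product} to compute the product of two arbitrary elements of the relevant cosets, exactly as done in the excerpt:
\begin{equation*}
(b^r a^x)(b^s a^y) = b^r (a^x b^s) a^y = b^r b^s a^{n^s x} a^y = b^{r+s} a^{n^s x + y}.
\end{equation*}
Ranging $x$ over $A$ and $y$ over $B$ then yields, as a set,
\begin{equation*}
ST = \{ b^{r+s} a^{n^s x + y} : x \in A,\ y \in B\} = b^{r+s} a^{n^s \ast A + B},
\end{equation*}
which is the first assertion of the theorem.

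For the cardinality statement $|ST| = |n^s \ast A + B|$, I would note that the map $z \mapsto b^{r+s} a^z$ from $\Z$ to the coset $b^{r+s}\langle a\rangle$ is injective, because $a$ has infinite order in $BS(1,n)$ (for instance, via the standard embedding of $BS(1,n)$ into the affine group of $\Q$ sending $a \mapsto (x \mapsto x+1)$ and $b \mapsto (x \mapsto nx)$, under which $a^k$ acts as translation by $k$ and is therefore nontrivial for $k \neq 0$). Hence distinct exponents of $a$ within the coset give distinct group elements, so $|ST|$ equals the number of distinct values of $n^s x + y$ with $x \in A$, $y \in B$, which is $|n^s \ast A + B|$. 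Specializing to $S = T$, $A = B$, $r = s$ gives the stated formulas for $S^2$. The only real subtlety is the infinite-order claim for $a$, which I expect to be the main (mild) obstacle since the earlier text does not explicitly justify it; the affine-group embedding supplies a clean one-line argument.
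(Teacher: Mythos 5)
Your proposal is correct and follows essentially the same route as the paper, which proves the theorem by the computations displayed just before its statement: the commutation identity $a^xb^t=b^ta^{n^tx}$ and the resulting product formula $(b^ra^x)(b^sa^y)=b^{r+s}a^{n^sx+y}$. The one thing you add beyond the paper is an explicit justification (via the affine representation) that $a$ has infinite order, which the paper tacitly assumes when passing from the set identity to the cardinality identity; this is a welcome, if minor, tightening.
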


This result will serve us as the major means for investigating
$|ST|$, and in particular $|S^2|$,
using  information about sizes of sums of dilates.  

\medskip

Skipping to our third topic (c), dealing with inverse problems in 
Baumslag-Solitar groups, it follows from
Theorem \reft{basic-dilates} and from the results 
mentioned in topic (a), that, using the previous notation, the following
statements hold:

{\it {\bf (C).} If $S=ba^A\subseteq BS(1,2)$, then $|S^2|=|2\ast A+A|$. Hence $|S^2|\ge 3|S|-2$
and if $|S^2|= 3|S|-2$, then $A$ is an arithmetic progression.} (see Theorem \reft{basic-group-2}(a), 
Section 2)
 
{\it {\bf (D).} If $S=ba^A\subseteq BS(1,2)$ and $|S^2|< 4|S|-4$, then 
$A$ is a {\bf subset} of an arithmetic progression of size $\leq 2|S|-3$.} (see Theorem
\reft{inverse-one-coset}, Section 4)

{\it {\bf (E).} If $S=ba^A\subseteq BS(1,r)$ with $r\geq 3$, then $|S^2|\ge 4|S|-4$.}
(see Corollary \reft{BS(1,r)}, Section 5) 

{\it {\bf (F).} If $S=b^ma^A\subseteq BS(1,2)$ with $m\ge 2$ an integer, then $|S^2|\ge 4|S|-4$.} 
(see Corollary \reft{4k-4forS}, Section 5)

For more results concerning $S^2$, when $S=ba^A\subseteq BS(1,n)$, see Section 2.
 
\medskip

Conditions of the type $|S^2|< 4|S|-4$ are called {\it small doubling property}.
Our final and main result deals with {\bf arbitary} finite non-abelian 
subsets $S$ of the 
{\it monoid} $BS^+(1,2)$, satisfying the small doubling property $|S^2|<3.5|S|-4$. 
This monoid is defined as follows:
$$BS^{+}(1,2)=\{g=b^ma^x \in BS(1,2): m,x \in \Z,\ m \ge 0\}$$
and it is a subset of $ BS(1,2)$, which is closed with respect to multiplication.

We proved the following
{\bf general} result concerning subsets of $BS^+(1,2)$ (see  Theorem \reft{inverse-general}
in Section 6). 

{\it {\bf (G).} If $S$ is a finite non-abelian subset of $BS^+(1,2)$ satisfying
$$|S^2|<3.5|S| -4,$$
then (i) $|S^2|\ge 3|S|-2$, (ii) $S=ba^A$ for some finite subset $A$ of $\Z$,  
which is {\bf contained} in an arithmetic progression of size $< 1.5|S|-2$
and (iii) $|S^2|=3|S|-2$ implies that $A$ {\bf is} an arithmetic progression of length $|S|$.}

\medskip

Our paper is a pilot study in the following more general direction.
Let $G$ be an infinite non-abelian group of certain type and let 
$S$ denote a finite {\bf non-abelian} subset (i.e. $\langle S\rangle$ is non-abelian) of $G$
of order $k$ ({k}-subset in short). It is natural to ask the following questions:

{\bf Q.1.} Find $m_G(k)$, the minimal possible value of $|S^2|$ for non-abelian
$k$-subsets $S$ of $G$. 

{\bf Q.2.}  What can we say about the detailed structure of {\it extremal $k$-subsets}
of $G$, i.e. finite non-abelian subsets $S$  of $G$ of size $k$, 
satisfying $$|S^2|=m_G(k)?$$

{\bf Q.3.} More generally, what can we say about the detailed structure of 
non-abelian $k$-subsets 
$S$ of $G$, satisfying some {\it small doubling property}, say,
$$m_G(k)\leq |S^2|<c_0k+d_0,$$
where $c_0$ is a small constant greater than $1$ and $d_0$ is some small constant.

\medskip

As mentioned above , we tried to answer these questions in the case of $G=BS(1,n)$
and in particular for $G=BS(1,2)$. 
We hope that our work will lead to similar studies for
other classes of non-abelian groups.

\medskip

This paper is a contribution to the current programme of extending the Freiman-type
theory, concerning the structure of subsets of $\Z$ with the small doubling property,
to such subsets of non-abelian groups (see, for example, \cite{b:gaf}, \cite{b:green} and \cite{b:tct}). 

\medskip

In this paper we  use the following notation. 
We write $[m,n]=[x\in\Z \mid m\le x\le n\}$.
The
{\it algebraic sum\/} of two finite subsets $A$ and $B$ of $\Z$ will be
denoted by 
$$A+B=\{a+b\mid\ a\in A,\ b\in B\}.$$
In particular, if $b\in \Z$, then $A+b=\{a+b:\ a\in A\}.$
The sum $2A=A+A$ is called the
{\it sumset\/} of $A$.
Throughout this paper we shall use the well known inequality
$$|A+B|\geq |A|+|B|-1.$$ 

Let $A = \{a_0<a_1<...<a_{k-1}\}$
be a finite increasing set of $k$ integers. 
By the {\it length \/} $\ell(A)$ of $A$ we mean the
difference 
$$\ell(A)= \max(A) - \min(A)=a_{k-1}-a_0$$ 
between its maximal and
minimal elements and
$$h_A=\ell(A)+1-|A|$$ 
denotes the number of
{\it holes} in $A$,  that is $h_A=|~[a_0,a_{k-1}]\setminus A~|.$
Finally, if $k\ge 2$, then we denote     
$$d(A)=g.c.d.(a_1-a_0,a_2-a_0,...,a_{k-1}-a_0).$$

We shall use several times the following result of Lev-Smelianski and Stanchescu:

{\bf Theorem LSS.} {\it  Let $A$ and $B$ be
finite subsets of $\N$ such that $0 \in A \cap B$.
Define

$$
\delta_{A,B} =
\begin{cases}
1, & \text{if } \ell(A) = \ell(B) , \\
0, & \text{if } \ell(A) \neq \ell(B) .
\end{cases}
$$
Then the following statements hold:
\begin{itemize}
\item[(i)] If $\ell(A) =\max (\ell(A), \ell(B))  \ge |A|+|B|-1 - \delta_{A,B}$
and $d(A)=1$, then $$|A+B|\ge |A|+2|B|-2-\delta_{A,B}.$$
\item[(ii)] If $\max (\ell(A), \ell(B))  \le |A|+|B|-2 - \delta_{A,B}$, then
$$|A+B| \ge (|A|+|B|-1)+\max (h_A,h_B)=\max (\ell(A)+|B|,\ell(B)+|A|).$$
\end{itemize}
}

\begin{proof}
Assertion (i) is Theorem 2(ii) from \cite{b:lev}. Assertion (ii) is 
Theorem 4 from \cite{b:sta}.
\end{proof}

\section{Extremal sets contained in one coset of $BS(1,n)$}

In this section we consider finite subsets $S$ of
$$G=BS(1,n)=\langle a,b\mid ab=ba^n\rangle$$ 
which are contained in the coset $b \langle a \rangle$ 
of $\langle a \rangle$ in $G$. In other words, if $|S|=k$, then
$$ S=b\{a^{x_0},a^{x_1},\dots,a^{x_{k-1}}\}=ba^A,$$
where $A=\{x_0, x_1,\dots, x_{k-1}\}\subseteq \Z$. 

In view of Theorem \reft{basic-dilates}, questions  Q.1 and Q.2 
concerning such $S$ belong to the {\it additive number theory}: 
find a tight lower bound for the size of the Minkowski sum $n\ast A+A$ 
and describe the structure of  extremal sets $A$.

For $n=2$ and $n=3$,
the answer to questions  Q.1 and Q.2
are known. Using  Theorems  1.1
and 1.2 in \cite{b:cill-2} and Theorem \reft{basic-dilates}, 
we get the following  
group-theoretical results:

\begin{theorem}\label{t:basic-group-2}
Let $A\subseteq \Z$ be a finite set of integers.
Then the following statements hold.
\begin{itemize}
\item[(a)] If $S=ba^A \subseteq BS(1,2)$, then
$|S^2| \ge 3|S|-2$.
Moreover, equality
holds  if and only if $A$ is an arithmetic progression.
\item[(b)]If $S=ba^A  \subseteq BS(1,3)$, then 
$$|S^2| \ge 4|S|-4.$$
Moreover, equality 
holds  if and only if either one of the following holds:
$$A=\{0,1,3\},\quad A=\{0,1,4\},\quad  
A = 3\ast \{0,...,n\} \cup (3 \ast \{0,...,n\}+1)$$
or $A$ is an affine transform of one of these sets.
\end{itemize}
\end{theorem}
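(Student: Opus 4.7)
The plan is to reduce the group-theoretic statement to the corresponding additive statements about sums of dilates, which are already available in the literature, by invoking Theorem~\ref{t:basic-dilates}.

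First I would apply Theorem~\ref{t:basic-dilates} to the set $S=ba^A$, taking $r=s=1$ and $B=A$. This immediately gives
$$S^2 = b^{2}a^{\,n\ast A+A}, \qquad |S^2|=|n\ast A+A|,$$
and in particular $|S|=|A|$. Hence the problem of bounding $|S^2|$ from below and of describing the extremal configurations is literally the problem of bounding $|A+n\ast A|$ and classifying its extremal sets, for $n=2$ in part~(a) and $n=3$ in part~(b).

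For part~(a) I would then quote Theorem~1.1 of Cilleruelo--Silva--Vinuesa \cite{b:cill-2}, which asserts
$$|A+2\ast A|\ \ge\ 3|A|-2,$$
with equality if and only if $A$ is an arithmetic progression. Substituting $|S^2|$ for $|A+2\ast A|$ and $|S|$ for $|A|$ yields the desired bound $|S^2|\ge 3|S|-2$ together with the extremal characterization: equality holds iff $A$ is an arithmetic progression.

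For part~(b) I would similarly invoke Theorem~1.2 of \cite{b:cill-2}, which states that $|A+3\ast A|\ge 4|A|-4$, with equality exactly when $A$ is an affine image of one of the sets
$$\{0,1,3\},\quad \{0,1,4\},\quad 3\ast\{0,\ldots,n\}\cup\bigl(3\ast\{0,\ldots,n\}+1\bigr).$$
(Here affine invariance of $|A+3\ast A|$ under the transformations $A\mapsto\alpha A+\beta$ is immediate, since $(\alpha A+\beta)+3\ast(\alpha A+\beta)=\alpha(A+3\ast A)+4\beta$.) Translating via Theorem~\ref{t:basic-dilates} again, the bound and the list of extremal $A$'s transfer verbatim to $|S^2|$ and $S=ba^A\subseteq BS(1,3)$.

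The only subtlety, and the main conceptual point of the theorem, is the translation step itself: one must verify that the non-commutative product $S^2$ in $BS(1,n)$ really does collapse, coset by coset, to a sum of dilates in $\Z$. This is precisely what Theorem~\ref{t:basic-dilates} accomplishes, using the Baumslag--Solitar relation $a^{x}b=ba^{nx}$. Once this reduction is in hand, no further additive or group-theoretic work is needed; the theorem is a direct corollary of the basic theorem together with the two cited results of Cilleruelo--Silva--Vinuesa.
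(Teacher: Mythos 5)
Your proposal is correct and follows essentially the same route as the paper: both apply Theorem~\ref{t:basic-dilates} with $r=s=1$ to get $|S^2|=|A+n\ast A|$ and then quote Theorems~1.1 and~1.2 of \cite{b:cill-2} for $n=2$ and $n=3$ respectively. The added remark on affine invariance is harmless but not needed beyond what the cited results already provide.
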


\begin{proof}

(a) It follows from Theorems  1.1 in \cite{b:cill-2} that
$|A+2\ast A|\ge 3|A|-2$ and $|A+2\ast A|= 3|A|-2$ if and only if
$A$ is an arithmetic progression. Since $|S^2|=|A+2\ast A|$ by Theorem \reft{basic-dilates}, 
we get (a). 

(b) It follows from  Theorem  1.2 in \cite{b:cill-2} that 
$|A+3\ast A| \ge 4|A|-4$ and  $|A+3\ast A|=4|A|-4$ if and only if
either one of the following holds:
$$A=\{0,1,3\},\quad A=\{0,1,4\},\quad
A = 3\ast \{0,...,n\} \cup (3 \ast \{0,...,n\}+1)$$
or $A$ is an affine transform of one of these sets. 
Since $|S^2|=|A+3\ast A|$ by Theorem \reft{basic-dilates}, we get (b).
\end{proof}

\medskip

For $n\geq 4$, Theorem \reft{basic-dilates} and known results
concerning sums of dilates yield the following partial results.

\begin{theorem}\label{t:basic-group-n}
Let $A\subseteq \Z$ be a finite set of integers and let
$S=ba^A$ be a subset of $BS(1,n)$. Then: 
\begin{itemize}
\item[(a)] If $S \subseteq BS(1,4)$ and $|S| \ge 5$, then $|S^2| \ge 5|S|-6$.
\item[(b)] If $S \subseteq BS(1,n)$ , then $|S^2| \ge (n+1)|S|-o(|S|)$.
\item[(c)] If $p$ is an odd prime number, $S \subseteq BS(1,p)$ and 
$|S| \ge 3(p-1)^2(p-1)!$, then 
$$|S^2| \ge (p+1)|S|-\lceil k(k+2)/4\rceil. $$
Moreover, equality holds if and only if 
$A = p\ast \{0,...,m\} + \{0,...,\frac{p-1}{2}\}$ for some $m$.

\end{itemize}
\end{theorem}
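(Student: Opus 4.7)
The plan is to exploit Theorem~\ref{t:basic-dilates} exactly as in the proof of the previous theorem, and then invoke the appropriate result from the literature on Minkowski sums of dilates for each of the three cases. Since $S = ba^A$ corresponds to $r=1$ in the notation of Theorem~\ref{t:basic-dilates}, we immediately get
\[
|S^2| \;=\; |A + n\ast A|,
\]
so each statement becomes a purely additive-combinatorics assertion about $A + n \ast A$.

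For part (a), I would look up the known sharp bound for $|A + 4\ast A|$: the inequality $|A+4\ast A|\ge 5|A|-6$ holds as soon as $|A|\ge 5$, and this appears in the literature on sums of two dilates (the $r=4$ case, cited in the papers of Cilleruelo--Silva--Vinuesa and Hamidoune--Rué that the introduction already lists). Applying it to the set $A$ with $|A| = |S|\ge 5$ gives (a). For part (b), the statement $(n+1)|S| - o(|S|)$ is exactly the general asymptotic lower bound for $|A + n\ast A|$ due to Bukh~\cite{b:bukh}, which gives $|A + n\ast A| \ge (n+1)|A| - o_n(|A|)$; translating through Theorem~\ref{t:basic-dilates} yields (b) verbatim. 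For part (c), one invokes the sharp result of Cilleruelo--Hamidoune--Serra (or the corresponding result in the references cited in the introduction) for $|A + p\ast A|$ with $p$ an odd prime, which states that for $|A|$ larger than the explicit threshold $3(p-1)^2 (p-1)!$ one has $|A+p\ast A|\ge (p+1)|A|-\lceil k(k+2)/4\rceil$, with equality characterized by sets of the form $A=p\ast\{0,\dots,m\}+\{0,\dots,(p-1)/2\}$ up to affine transforms. Translating back through Theorem~\ref{t:basic-dilates} gives (c), including the equality characterization.

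The only non-routine step is making sure the hypotheses of the cited external theorems line up with ours: the restriction $|S|\ge 5$ in (a), the hidden dependence on $n$ in the $o(|S|)$ term in (b), and the size threshold $3(p-1)^2 (p-1)!$ together with the equality characterization in (c) all come directly from the quoted results; nothing new needs to be proved here. I expect no serious obstacle, since the whole point of Theorem~\ref{t:basic-dilates} is to make this kind of transfer immediate; the proof should be just a few lines per part, pointing to the relevant reference and noting that $|S^2|=|A+n\ast A|$. The slight care needed is in (c), where one must record that the affine transform acting on $A$ corresponds to passing from $S = ba^A$ to a subset of the same coset $b\langle a\rangle$ obtained by left-multiplication and dilation on the $\Z$-side, so that the equality characterization is genuinely preserved under the reduction.
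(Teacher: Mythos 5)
Your proposal is correct and follows exactly the paper's own route: apply Theorem~\ref{t:basic-dilates} to reduce each part to a known bound on $|A+n\ast A|$, citing Bukh for (b) and Cilleruelo--Hamidoune--Serra for (c). The only slip is bibliographic: the bound $|A+4\ast A|\ge 5|A|-6$ for $|A|\ge 5$ used in (a) comes from Theorem~3 of Du--Cao--Sun \cite{b:shan}, not from the Cilleruelo--Silva--Vinuesa or Hamidoune--Ru\'e papers you name, but this does not affect the argument.
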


\begin{proof}
Using Theorem \reft{basic-dilates}, we get $|S^2| =|n \ast A +A|.$ 

Inequality (a) follows from $|S^2|=|4\ast A+A|$ and Theorem 3 in \cite{b:shan}.

Inequality (b) follows from $|S^2|=|n\ast A+A|$ and Theorem 1.2 in \cite{b:bukh}.

Assertion (c) follows from $|S^2|=|p\ast A+A|$ and Corollary  1.3 in \cite{b:cill-1}.

\end{proof}

\section{An extended inverse result for $|A+2\ast A|$.}
In this section we extend  Theorem 1.1 in \cite{b:cill-2},
which states that $|A+2\ast A|\geq 3|A|-2$ for any finite subset $A$ of $\Z$
and $|A+2\ast A|= 3|A|-2$ implies that $A$ is an arithmetic progression.
In Theorem \reft{inverse-integer} below, 
we prove the following  {\it extended inverse result} in the additive number theory: 
if $A$ is a finite subset of $\Z$ of size $|A|\ge 3 $ 
satisfying  $|A+2\ast A|< 4|A|-4$, 
then $A$ is {\bf contained} in an arithmetic
progression of size $2|A|-3$ at most. This  
result will be used in the next section.

\begin{theorem}\label{t:inverse-integer}
Let  $A=\{a_0<a_1<a_2<\dotsb<a_{k-1}\}\subset \Z$ be a finite set of
integers of size  $k=|A|\ge 1$. Then the following statements hold.
\begin{itemize}
\item[(a)] If $1\le k\le 2$, then $|A+2\ast A|=3k-2$ and
$A$ {\bf is} an arithmetic progression of size $k$.
\item[(b)] If $k\ge 3$, assume that
\begin{align}\label{e:4k-4}
|A+2\ast A|=(3k-2)+h
<4k-4.
\end{align}
Then   
$$h \ge 0,\quad |A+2\ast A| \ge 3k-2$$ 
and the set
$A$ is a {\bf subset} of an arithmetic progression
$$P=\{a_0, a_0+d, a_0+2d,\dots, a_0+(l-1)d\}$$
of size $|P|$ bounded by  
\begin{align}\label{e:BOUND}
|P| \le k+h = |A+2\ast A| -2k+2 \le 2k-3.
\end{align}
\item[(c)] If $k\ge 1$ and $|A+2\ast A|=3k-2$, 
then  $A$ {\bf is} an arithmetic progression
$$A=\{a_0, a_0+d, a_0+2d,\dots, a_0+(k-1)d\}.$$
\end{itemize}
\end{theorem}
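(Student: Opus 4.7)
Parts (a) and (c) are immediate: for $k\le 2$ part (a) is trivial, while (c) is precisely Theorem~1.1 of \cite{b:cill-2}. The substantive content is (b). Since $|A+2*A|$ is affine-invariant under $A\mapsto(A-a_0)/d(A)$, I normalize so that $\min A=0$ and $d(A)=1$. This forces $A$ to contain integers of both parities, so in the parity decomposition $A=A_0\sqcup A_1$ with $k_i:=|A_i|\ge 1$, the sets $A'_0:=A_0/2$ and $A'_1:=(A_1-1)/2$ are well-defined nonempty subsets of $\N$. Since $a+2b$ has the parity of $a$, the sets $A_0+2*A$ and $A_1+2*A$ are disjoint, and a short calculation yields the key identity
\begin{equation*}
|A+2*A|=|A'_0+A|+|A'_1+A|.
\end{equation*}

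Setting $r_i:=|A'_i+A|-k-k_i+1\ge 0$, this identity reads $r_0+r_1=h$. The hypothesis $|A+2*A|<4k-4$ forces $h\le k-3$, hence $r_0+r_1<(k_0-1)+(k_1-1)$, so at least one of the inequalities $r_i\le k_i-2$ holds. Relabeling, I may assume $r_0\le k_0-2$; in the degenerate case $k_0=1$ one has $r_0=0$ automatically, and the argument below is then carried out with $A'_1$ in place of $A'_0$, using $r_1=h\le k-3=k_1-2$.

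The decisive step is a two-step application of Theorem~LSS to the pair $(A,A'_0)$, translated if necessary so that $0\in A\cap A'_0$. Because $\ell(A'_0)\le\lfloor\ell(A)/2\rfloor<\ell(A)$ whenever $\ell(A)\ge k-1\ge 2$, we have $\delta_{A,A'_0}=0$ and $\ell(A)=\max(\ell(A),\ell(A'_0))$. Combined with $d(A)=1$, the contrapositive of LSS~(i) converts the assumption $r_0\le k_0-2$ (i.e.\ $|A'_0+A|<k+2k_0-2$) into the length bound $\ell(A)\le k+k_0-2$. This is precisely the hypothesis of LSS~(ii), whose conclusion $|A'_0+A|\ge\ell(A)+k_0$ rearranges to
\begin{equation*}
\ell(A)\le|A'_0+A|-k_0=k-1+r_0\le k-1+h.
\end{equation*}
Hence $A$ lies in an interval of size $\le k+h\le 2k-3$, and undoing the normalization yields the required arithmetic progression $P$ of common difference $d(A)$. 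The main obstacle is precisely this bootstrap between the two parts of Theorem~LSS: LSS~(i) alone only yields the weaker bound $|P|\le k+k_0-1$, which can exceed $k+h$, so one must re-feed its length conclusion as the hypothesis of LSS~(ii) in order to replace $k_0$ by $r_0$ and link $\ell(A)$ directly to the excess $h$.
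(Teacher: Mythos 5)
Your proof is correct and follows essentially the same route as the paper: normalize $A$, split by parity, use the identity $|A+2\ast A|=|A_0'+A|+|A_1'+A|$, and then bootstrap Theorem LSS(i) into Theorem LSS(ii) to convert the smallness of $|A+2\ast A|$ first into a length bound and then into the bound $\ell(A)\le k-1+h$. The only (harmless) difference is bookkeeping: the paper derives $\ell(A)\le k+\max(|A_0|,|A_1|)-2$ by a global contradiction with $4k-4$ and then applies LSS(ii) to the larger parity class to count holes, whereas you track the per-class excesses $r_0,r_1$ and run the LSS(i)-contrapositive/LSS(ii) step on a single well-chosen class.
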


\begin{proof}
{\bf (a)} If $k=1$, then $|A+2\ast A|=1=3k-2$
and $A$ is an arithmetic progression of size  $k$.
If $k=2$ and $A=\{a<b\}$, then 
$$A+2\ast A=\{3a,\ a+2b,\ b+2a,\ 3b\}.$$
Since $a\neq b$, it
follows that $|A+2\ast A|=4=3k-2$ and
$A$ is an arithmetic progression of size $k$. The proof of (a) 
is complete. 

{\bf (b)} We assume now that $k\ge 3$ and (\ref{e:4k-4}) holds.
Suppose, first, that $A$ is {\it normal}, i.e.
\begin{align}\label{e:NORMAL_SET}
\min(A)=a_0=0 ~~{\rm and} ~~d=d(A)=gcd(A)
=1.
\end{align}
Thus $\ell(A)=a_{k-1}$.

We split the set $A$ into a disjoint union 
$$A=A_0 \cup A_1,$$
where $A_0 \subseteq 2\Z$ and  $A_1 \subseteq 2\Z +1.$
Since $0=a_0\in A_0$ and $d(A)=1$, it follows that
$A_0 \neq \emptyset$ and $A_1 \neq \emptyset$. 
Therefore
$$m=|A_0| \ge 1, ~~n=|A_1| \ge 1~~{\rm and} ~~k=m+n.$$

We denote
$$A_0=\{0=2x_0 < 2x_1 <...< 2x_{m-1}\}, ~~A_0^*=\frac{1}{2}A_0=\{0<x_1<...<x_{m-1}\},$$
$$A_1=\{2y_0+1 < 2y_1 +1<...< 2y_{n-1}+1\},$$
and
$$A_1^*=\frac{1}{2}(A_1-1)-y_0=\{0<y_1-y_0<y_2-y_0<...<y_{n-1}-y_0\}.$$
Thus
$$\ell(A_0^*)=x_{m-1}<a_{k-1}=\ell(A)~~{\rm and~~also}~~\ell(A_1^*)=
y_{n-1}-y_0<a_{k-1}=\ell(A).$$

The set $A+2\ast A$ is the union of two disjoint subsets
$A_0+2\ast A \subseteq 2\Z$ and $A_1+2\ast A \subseteq 2\Z+1$ and therefore
\begin{align}\label{e:split}
|A+2\ast A|=|A_0+2\ast A|+|A_1+2\ast A|=|A_0^*+A|+|A_1^*+A|.
\end{align}
 
We continue our proof with two claims.

{\bf Claim 1:}
\begin{align}\label{e:length}
\ell(A)
\le k+\max(m,n)-2 \le 2k-3.
\end{align}

For the proof of Claim 1 we shall use Theorem LSS (i).
Since $\ell(A)>\ell(A_0^*),\ell(A_1^*)$, we have
$\delta_{A,A_0^*}=\delta_{A,A_1^*}=0$.

Suppose, first, that $m \le n$. If the claim is false,
then 
$$\ell(A)\ge  k+n-1=|A|+|A_1^*|-1 \ge k+m-1=|A|+|A_0^*|-1$$
and since $d(A)=1$, Theorem LSS (i) yields the following inequalities: 
\begin{align}\label{e:k+2l-2}
|A_0^*+A|  \ge k+2|A_0^*|-2=k+2m-2 ~~{\rm and } ~~ 
|A_1^*+A| & \ge k+2|A_1^*|-2=k+2n-2.
\end{align}
Using (\ref{e:split}) and (\ref{e:k+2l-2}), we get that $|A+2\ast A| \ge 4k-4,$
which contradicts our hypothesis (\ref{e:4k-4}).

Similarly, if $n \le m$ and 
$$\ell(A)\ge  k+m-1\ge k+n-1,$$
then $d(A)=1$ and Theorem LSS (i) imply again the inequalities (\ref{e:k+2l-2}),
which together with (\ref{e:split}) yield $|A+2\ast A| \ge 4k-4,$
a contradiction.

Hence $\ell(A)\le k+\max(m,n)-2 $. Since $k=m+n$ and $m,n\ge 1$,
it follows that $max(m,n)\le k-1 $ and hence  $\ell(A)\le k+\max(m,n)-2\le 2k-3$.
The proof of Claim 1 is complete.

Next we state and prove Claim 2.

{\bf Claim 2:}
\begin{align}\label{e:holes}
|A+2\ast A|
\ge (3k-2)+h_A.
\end{align}

Recall that $h_A=\ell(A)+1-|A|$. For the proof of Claim 2 
we shall use Claim 1 and  Theorem LSS(ii).
We distinguish between two cases.

{\bf Case 1:} Suppose that $m \le n$ and hence, by (\ref{e:length}),
$\ell(A) \le k+n-2. $ 

Thus it follows by Theorem LSS(ii) that 
$$|A_1^*+A| \ge (n+k-1)+h_A$$
and therefore
\begin{align}
|A+2\ast A|&=|A_0^*+A|+|A_1^*+A| \nonumber \\
&\ge (|A_0^*|+|A|-1)+|A_1^*+A| \ge (m+k-1)+ (n+k-1)+h_A\nonumber \\
&=(3k-2)+h_A.\nonumber
\end{align}

{\bf Case 2:} Suppose that $n < m$ and hence, by (\ref{e:length}), 
$\ell(A) \le k+m-2. $ 

Thus it follows by Theorem LSS(ii) that 
$$|A_0^*+A| \ge (m+k-1)+h_A$$
and therefore
$$
|A+2\ast A|=|A_0^*+A|+|A_1^*+A| \ge (m+k-1)+h_A+ (n+k-1)=(3k-2)+h_A.
$$

In both cases we obtain that $h_A$, the total number of holes in 
the normal set $A$, satisfies
$$0 \le h_A  \le |A+2\ast A| -(3k-2)= h \le k-3.$$
Hence 
$$h\geq  h_A \geq 0\quad \text{and}\quad |A+2\ast A| \ge (3k-2).$$
Moreover, the set $A$ is contained in the arithmetic progression 
$$P=\{a_0, a_0+1, a_0+2,..., a_{k-1}\}=\{0,1,2,...,a_{k-1}\}$$
of size
\begin{align}\label{e:partial-conclusion}
a_{k-1}+1=k+h_A \le k+h \le 2k-3.
\end{align}

It follows that Theorem \reft{inverse-integer} (b) holds for  {\it normal} sets $A$ 
satisfying (\ref{e:NORMAL_SET})
and (\ref{e:4k-4}). 

Let now $A$ be an {\it arbitrary} finite set of $k=|A| \ge 3$ integers
satisfying the inequality (\ref{e:4k-4}).
We define 
$$
B=\frac{1}{d(A)}(A-a_0)=\{\frac{1}{d(A)}(x-a_0): x \in A\}.
$$
Note that $|B|=|A|=k,\ \min(B)=0,\ d(B)=1$ and 
$$
|B+2\ast B|=|A+2 \ast A|=(3k-2)+h < 4k-4.
$$
Therefore $B$ is a normal set satisfying inequality (\ref{e:4k-4}) 
of Theorem  \reft{inverse-integer} and as shown above
$$0 \le h_B \le |B+2\ast B| -(3k-2) = |A+2\ast A| -(3k-2)= h \le k-3.$$
Hence also in the general case we get
$$h\geq 0\quad \text{and}\quad |A+2\ast A| \ge (3k-2).$$
Moreover, it follows from (\ref{e:partial-conclusion}) 
applied to $B$ that $B$ is contained in the arithmetic progression
$$P=\{0,1,2,...,b_{k-1}\}$$
with
$$
b_{k-1}=\max(B) \le k+h-1\le 2k-4.
$$
Thus
$A=d(A)B+a_0$ is contained in an arithmetic progression 
$$\{a_0, a_0+d, a_0+2d, ..., a_0+(k+h-1)d\}$$
of size $k+h\leq 2k-3$, where $d$ denotes $d(A)$.
The proof of (b) is complete.  

{\bf (c)} If $1\le k\le 2$, then our claim follows from (a).
So suppose that $k\ge 3$. Then $h=0$ and by (\ref{e:BOUND}) in (b),
$A$ is a subset of an arithmetic progression of size $k$ at most.
But $A$ is a set of size $k$, so $A$ is equal to the arithmetic progression.
The proof of (c), and hence also of 
Theorem \reft{inverse-integer}, is now complete.
 
\end{proof}

\section{An extended inverse result for subsets of $b\langle a\rangle$ in $BS(1,2)$.}
\par
In this section we shall apply Theorem \reft{inverse-integer} in order
to obtain an {\it extended inverse result} in group theory.

Recall that $BS(1,2)=\langle a,b\mid ab=ba^2\rangle$.
In Theorem \reft{basic-group-2} we obtained the following
inverse group-theoretical result:  

{\it If $A\subseteq \Z$ is a finite set of integers
and $S=ba^A \subset BS(1,2)$, then
$$|S^2| \ge 3|S|-2.$$
Moreover, equality
holds  if and only if $A$ is an arithmetic progression.}

Theorem \reft{inverse-integer}, together with Theorem \reft{basic-dilates},
allow us to solve the corresponding
extended  inverse group-theoretical problem.
\begin{theorem}\label{t:inverse-one-coset}
Let $A\subseteq \Z$ be a finite set of integers of size $k=|A| \ge 1$. 
If $S=ba^A$  is a finite subset of the group $BS(1,2)$,
then $|S|=k$ and
\begin{align}\label{e:LB}
|S^2| \ge 3k-2.
\end{align}
Moreover, if $k \ge 3$ and 
\begin{align}\label{e:sdp}
| S^2 | = (3 k-2)+h < 4 | S |-4,
\end{align}
then $h \ge 0$ and  $S$ is a subset of a geometric progression
$$
S \subseteq \{ba^u,ba^{u+d}, ba^{u+2d},...,ba^{u+(k+h-1)d}\}
$$
of size $k+h\leq 2k-3$, where $u=\min(A)$ and $d=d(A)$.

Furthermore, if either $1\le k\le 2$ or $k\ge 3$ and $h=0$,
then $S$ {\bf is} the geometric progression
$$
S =\{ba^u,ba^{u+d}, ba^{u+2d},...,ba^{u+(k-1)d}\}.
$$

\end{theorem}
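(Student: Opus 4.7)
The plan is to reduce everything to the additive statement Theorem \reft{inverse-integer} through the dictionary given by Theorem \reft{basic-dilates}. Applied with $r=s=1$, $n=2$, Theorem \reft{basic-dilates} yields
$$S^{2}=b^{2}a^{2\ast A+A}\qquad\text{and hence}\qquad |S^{2}|=|2\ast A+A|=|A+2\ast A|.$$
Also, the map $x\mapsto ba^{x}$ from $\Z$ onto the coset $b\langle a\rangle$ is injective (the coset is in bijection with $\Z$), so $|S|=|A|=k$. Thus the whole theorem is really a statement about $|A+2\ast A|$ and the structure of $A$.

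From here I would proceed in three short steps. First, to obtain the unconditional lower bound (\ref{e:LB}), I would simply quote Theorem \reft{inverse-integer}(a) for $k\le 2$ and Theorem \reft{inverse-integer}(b) for $k\ge 3$, both of which deliver $|A+2\ast A|\ge 3k-2$, i.e.\ $|S^{2}|\ge 3k-2$. Second, under the small-doubling hypothesis (\ref{e:sdp}), $|A+2\ast A|=(3k-2)+h<4k-4$, so Theorem \reft{inverse-integer}(b) applies and gives $h\ge 0$ together with the inclusion
$$A\subseteq P=\{u,\,u+d,\,u+2d,\dots,u+(k+h-1)d\},\qquad u=\min(A),\ d=d(A),$$
of size $|P|=k+h\le 2k-3$. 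Applying the injection $x\mapsto ba^{x}$ to the inclusion $A\subseteq P$ immediately transports it into the group:
$$S=ba^{A}\subseteq ba^{P}=\{ba^{u},\,ba^{u+d},\dots,ba^{u+(k+h-1)d}\},$$
which is exactly the geometric progression asserted in the theorem.

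Finally, I would handle the exact-equality and small-$k$ situations. If $1\le k\le 2$, Theorem \reft{inverse-integer}(a) asserts that $A$ itself is an arithmetic progression of length $k$, and applying $x\mapsto ba^{x}$ to this equality (rather than inclusion) shows $S$ is the geometric progression of length $k$. If instead $k\ge 3$ and $h=0$, the same translation of Theorem \reft{inverse-integer}(c) yields $A=\{u,u+d,\dots,u+(k-1)d\}$ and hence $S=\{ba^{u},ba^{u+d},\dots,ba^{u+(k-1)d}\}$, as required. There is no real obstacle: the entire argument is a transcription of the additive result into $BS(1,2)$, and the only thing needing verification is that $|S|=|A|$ and that inclusion of $A$ in an arithmetic progression in $\Z$ corresponds, under $x\mapsto ba^{x}$, to inclusion of $S$ in a geometric progression inside the coset $b\langle a\rangle$.
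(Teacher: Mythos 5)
Your proposal is correct and follows essentially the same route as the paper: translate $|S^2|$ into $|A+2\ast A|$ via Theorem~\ref{t:basic-dilates} and then invoke parts (a), (b), (c) of Theorem~\ref{t:inverse-integer} to obtain the lower bound, the containment in a progression, and the equality cases. The only addition you make is the explicit remark that $x\mapsto ba^{x}$ is a bijection of $\Z$ onto the coset $b\langle a\rangle$, which the paper treats as immediate.
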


\begin{proof}
Clearly $|S|=|A|=k$ and by
Theorem \reft{basic-dilates},  $|S^2|=|2\ast A+A|$. 
Hence it follows by Theorem \reft{inverse-integer} that $|S^2| \ge 3k-2$,
proving \refe{LB}.

If $k\ge 3$, then \refe{sdp} implies, again by Theorem \reft{basic-dilates},
that
$$|A+2\ast A|=(3k-2)+h<4k-4.$$
Hence it follows by  Theorem \reft{inverse-integer}, that
$h\ge 0$ and $A$ is a subset of an arithmetic progression
$$P=\{u,u+d,u+2d,\dots,u+(k+h-1)d\}$$
of size $k+h\le 2k-3$, where $u=\min(A)$ and $d=d(A)$. 
Hence
$$
S \subseteq \{ba^u,ba^{u+d}, ba^{u+2d},...,ba^{u+(k+h-1)d}\}.
$$

Finally, if either $1\le k\le 2$ or $k\ge 3$ and $h=0$,
then, by Theorem \reft{inverse-integer}, $A$ {\bf is} an arithmetic progression
and hence $S$ {\bf is} the required geometric progression.

\end{proof}

\section{A new lower bound for $|A+ r \ast A|$ and applications.}

In this section we obtain a new tight lower bound  for $|A+ r \ast A|$,
provided that $r\geq 3$. 

\begin{theorem}\label{t:A+kA}
Let $A=\{a_0<a_1<a_2<\dotsb<a_{k-1}\}\subset \Z$ be a finite set of integers
of size $ |A|=k \ge 1$. Then
for every integer $r \ge 3$ we  have 
\begin{align}\label{e:A+kA}
|A+ r \ast A| \ge \max(4k-4,1)\ge 3k-2.
\end{align}
\end{theorem}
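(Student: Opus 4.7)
The plan is to prove $|A+r\ast A|\ge\max(4k-4,1)$ by induction on $k=|A|$; the second inequality $\max(4k-4,1)\ge 3k-2$ is elementary.

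\textbf{Base cases.} For $k=1$ the assertion is trivial. For $k=2$ with $A=\{a_0<a_1\}$, the four elements $(r+1)a_0<a_1+ra_0<a_0+ra_1<(r+1)a_1$ are pairwise distinct since $(r-1)(a_1-a_0)>0$ for $r\ge 2$, so $|A+r\ast A|=4=4k-4$.

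\textbf{Inductive step ($k\ge 3$).} Set $A'=A\setminus\{a_{k-1}\}$. By the induction hypothesis $|A'+r\ast A'|\ge 4k-8$, and since $\max(A'+r\ast A')=(r+1)a_{k-2}$, any element of $A+r\ast A$ exceeding $(r+1)a_{k-2}$ is automatically new. Write $G=a_{k-1}-a_{k-2}$ and $L'=a_{k-2}-a_0$.

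\emph{Case A: $rG>L'$.} All $k$ elements of the ``top row'' $A+ra_{k-1}=\{a_j+ra_{k-1}:0\le j\le k-1\}$ satisfy $a_j+ra_{k-1}>(r+1)a_{k-2}$ (since $a_j\ge a_0$ and $rG>L'$), hence are new. This gives $|A+r\ast A|\ge(4k-8)+k=5k-8\ge 4k-4$ for $k\ge 4$. For $k=3$, one additionally checks that $a_{k-1}+ra_{k-2}$ (and, in the degenerate sub-case when it coincides with a top-row element, some $a_{k-1}+ra_j$ for $1\le j\le k-2$) lies outside the top row and outside $A'+r\ast A'$, producing the required fourth new element.

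\emph{Case B: $rG\le L'$.} Symmetry comes to the rescue: for $A^{\ast}=a_{k-1}-A$ one has $|A^{\ast}+r\ast A^{\ast}|=|A+r\ast A|$, so we may equivalently remove the smallest element $a_0$ and look for new elements of $A+r\ast A$ lying below $\min\bigl((A\setminus\{a_0\})+r\ast(A\setminus\{a_0\})\bigr)=(r+1)a_1$. The analogue of Case A succeeds as soon as $(r+1)a_1>a_{k-1}$.

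\textbf{The main obstacle} is the residual case in which $rG\le L'$ \emph{and} $(r+1)a_1\le a_{k-1}$ hold simultaneously: both endpoint gaps of $A$ are then small compared to the overall spread, and neither direction of the induction produces a full row of new elements. To close this case I plan to combine the three unconditional new elements
\[
E_1=(r+1)a_{k-1},\qquad E_2=a_{k-2}+ra_{k-1},\qquad E_3=a_{k-1}+ra_{k-2},
\]
each strictly larger than $(r+1)a_{k-2}$, with a direct lower bound for $|A+r\ast A|$ obtained from Theorem~LSS. After the normalisation $a_0=0$, $d(A)=1$, the residual conditions force $A$ to meet at least two residue classes modulo $r$; decomposing $A=A^{(0)}\sqcup A^{(1)}\sqcup\dots$ accordingly, one has $|A+r\ast A|=\sum_{t}|A^{\prime(t)}+A|$, where $A^{\prime(t)}=\frac{1}{r}(A^{(t)}-t)$. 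Theorem~LSS(ii), applied to each pair $(A^{\prime(t)},A)$ with $0$ as their common element (the ``small length'' hypothesis being ensured by the residual constraints on the endpoint gaps), yields enough contribution to $|A+r\ast A|$ that, together with $E_1,E_2,E_3$, reaches $4k-4$. Carrying out this hybrid direct/inductive argument cleanly in every residual sub-case is the crux of the proof.
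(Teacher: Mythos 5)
Your induction-by-endpoint-removal strategy is genuinely different from the paper's proof, which is not inductive at all: after normalising ($a_0=0$, $d(A)=1$) the paper splits $A$ into its residue classes modulo $r$, disposes of the case of three or more classes by the trivial bound $|A_i+r\ast A|\ge |A_i|+|A|-1$, and for exactly two classes runs a case analysis on $\ell(A)$, applying Theorem LSS(i) when $\ell(A)$ is large and Theorem LSS(ii) when it is small. Your base cases and Cases A/B are essentially sound, modulo the $k=3$ degenerate sub-case: there the fallback elements $a_{k-1}+ra_j$ with $1\le j\le k-2$ reduce to the single element $a_2+ra_1$ that you have just assumed coincides with a top-row element, so you would still need to produce a fourth new element (e.g.\ $a_2+ra_0$) by a separate computation.

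The genuine gap is the residual case, which you yourself flag as the crux. Two things go wrong with the plan as described. First, the claim that the residual constraints ensure the small-length hypothesis of Theorem LSS(ii) is false: the conditions $rG\le L'$ and $(r+1)a_1\le a_{k-1}$ only bound the two endpoint gaps relative to $\ell(A)$; they place no bound on $\ell(A)$ in terms of $k$. For example, $A=\{0,1,3,99,100,102\}$ with $r=3$ satisfies both residual conditions, meets exactly two residue classes modulo $3$, and has $\ell(A)=102$, far exceeding the threshold $|A^{\prime(t)}|+|A|-2\le 8$ required by LSS(ii); here one must use LSS(i) instead. So the residual case unavoidably requires the same large-length versus small-length dichotomy that constitutes the entire body of the paper's proof, and that dichotomy is absent from your sketch. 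Second, the arithmetic of the hybrid step is not set up: the three elements $E_1,E_2,E_3$ added to the inductive bound give only $4k-8+3=4k-5$, one short of the target, and it is not explained how a contribution coming from an LSS estimate of $|A+r\ast A|$ itself (a direct bound, not a count of elements new relative to $A'+r\ast A'$) could legitimately be added to that total. As it stands the residual case is a plan for a proof rather than a proof.
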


{\bf Remark.} If $r=3$, then Theorem \reft{A+kA} follows from Theorem 1.2
in  \cite{b:cill-2}. If $r \ge 4$, then
the results of \cite{b:bukh} and \cite{b:cill-1} are asymptotically stronger than 
(\ref{e:A+kA}), but we need a lower bound valid for {\it every} $k$. Our
proof is independent of \cite{b:cill-2}. 
\begin{proof}
If $k=1$, then $|A+ r \ast A|=1=\max(4k-4,1)=3k-2$ and the theorem holds.

If $k=2$, then $A=\{a<b\}$ and $r>1$ implies that $a+rb\neq b+ra$. Hence
$$|A+ r \ast A|=|\{a,b\} + \{ra, rb \}| =|\{(r+1)a,b+ra,a+rb,(r+1)b\}|=4=4k-4=3k-2,$$
so the theorem holds also for
$k=2$. Therefore we shall assume,
from now on, that $k\ge 3$. Thus, since $k>1$, we need only to prove that 
$$|A+ r \ast A| \ge 4k-4.$$

We assume first that $A$ is {\it normal}, i.e.
\begin{align}\label{e:NORMAL_SET_2}
\min(A)=a_0=0\qquad {\rm and}\qquad d=d(A)=gcd(A)
=1.
\end{align}

We split the set $A$ into a 
disjoint union of $s$ non-empty subsets, each of which being contained
in a {\bf distinct} residue class modulo $r$:
$$A=A_1 \cup A_2 \cup ... \cup A_s,$$
where 
$$A_i \subseteq x_i+r\Z,\quad |A_i| \ge 1\quad {\rm and} \quad 
x_i=\min A_i.$$
Note that $k\geq 3$, $d(A)=1$ and $\min(A)=a_0=0$, so $s \ge 2$.

We clearly have 
$$|A+ r \ast A|=\sum _{i=1}^{s} |A_i+r \ast A| \ge\sum _{i=1}^{s} (|A_i|+|A|-1)
=|A|+s(|A|-1). $$

If $s \ge 3$, then  we get $|A+ r \ast A| \ge 4|A|-3$
and Theorem  \reft{A+kA} follows.

Hence we may assume that $s=2$ and  $A=A_1 \cup A_2$, where $A_1$ and $A_2$  
are 
non-empty subsets of $A$ contained in disjoint 
residue classes modulo $r$. Let 
$$k_1=|A_1|\quad {\rm and}\quad   k_2=|A_2|.$$
Then $k=k_1+k_2$ and we may assume, without loss of generality, that
$$k_1\ge k_2.$$
Hence $2k_1\ge k$ and $k_1\ge 2$.

Recall that if $S$ is a finite subset of $\Z$, then $\ell(S)$, 
the length of $S$, is defined by 
$\ell(S)=\max(S)-\min(S)$. For $i=1,2$ 
we define 
$$A_i^*= \frac{1}{r}(A_i - \min(A_i))=\{\frac{1}{r}(x-x_i): x\in A_i\}.$$
Clearly $|A_i^*|=|A_i|$ and
we have $$|A_i+r \ast A|=|A_i^*+A|.$$ Thus
$$|A+ r \ast A|=|A_1+r \ast A|+|A_2+r \ast A|=|A_1^*+A|+|A_2^*+A|.$$
Note also that 
$$\ell(A_i) \ge r(k_i-1)\quad {\rm and}\quad 
\ell(A_i^*) =\frac{1}{r}\ell(A_i),$$
so
$$k_i-1 \le \ell(A_i^*) =\frac{1}{r} \ell(A_i) \le \ell(A_i) \le \ell(A).$$ 
Moreover,  $\ell(A_i) >\ell(A_i^*)$ if and only if $k_i>1$, so 
$\ell(A_1) >\ell(A_1^*)$ since $k_1 \ge 2$.

Clearly we must have either $k_1=k-1>k_2=1$
or $k_1\ge k_2>1$. We shall examine these two cases separately.

{\bf Case 1:} Suppose that $k_1=k-1 > k_2=1.$  We have $k=k_1+1$ and
$\ell(A)\ge \ell(A_1) >\ell(A_1^*)$. Moreover,
$$\ell(A) \ge  \ell(A_1) \ge r(k_1-1) \ge 3k_1-3.$$ 
We distinguish now between two complementary subcases.

\noindent
(i) Suppose that $\ell(A) \ge k+k_1-1=2k_1$. Then, since
$d(A)=1$, Theorem LSS(i) implies that 
$$|A+A_1^*| \ge k+2k_1-2.$$
(ii) Suppose that $\ell(A) \le k+k_1-2=2k_1-1$. Then, since $k_1 \ge 2$,  
Theorem LSS(ii) implies that
$$
|A+A_1^*| \ge 
\ell(A) +|A_1| \ge 3k_1-3+k_1 =4k_1-3 \ge 3k_1-1=k +2k_1-2.
$$

Thus in both cases we have
$$|A+ r \ast A|=|A_1^*+ A|+|A_2^*+ A| \ge (k+2k_1-2)+ k=4k-4,$$
as required

{\bf Case 2:} Suppose that $k_1 \ge k_2 >1.$ 
Then
$$\ell(A) > \ell(A_1^*),\quad \ell(A) > \ell(A_2^*)$$ and
$$ \ell(A) \ge \ell(A_i) \ge r(k_i-1) \ge 3k_i-3 $$
for $i=1,2$.
We distinguish now between three complementary subcases.

\noindent
(i) Suppose that  $\ell(A) \ge k+k_1-1. $ 
Then also $\ell(A) \ge k+k_2-1 $ and since $d(A)=1$,
Theorem LSS(i)  implies that
$$
|A+A_1^*| \ge k+2k_1-2,\quad |A+A_2^*| \ge k+2k_2-2.$$
Hence
$$|A+ r \ast A|=|A_1^*+ A|+|A_2^*+ A| \ge (k+2k_1-2)+
(k+2k_2-2)=4k_1+4k_2-4=4k-4,$$
as required.

\noindent
(ii) Suppose that $k+k_2-1 \le \ell(A) \le k+k_1-2.$ 
Then $$k_1 \ge k_2 +1$$ and since
$d(A)=1$, 
Theorem LSS(i) and (ii) imply that
$$
|A+A_1^*| \ge \ell(A)+|A_1^*|  \ge 3k_1-3+k_1=4k_1-3 \ \text{and}\ 
|A+A_2^*| \ge k+2k_2-2.$$
Hence
$$|A+ r \ast A|=|A_1^*+ A|+|A_2^*+ A| \ge 5k_1+3k_2-5 \ge 4k_1+4k_2-4=4k-4,$$
as required.

\noindent
(iii) Suppose that $\ell(A) \le k+k_2-2.$ 
Then $3k_1-3 \le \ell(A) \le k_1+2k_2-2$, yielding 
$2k_1 \le 2k_2+1$. Since $k_1\ge k_2$, it follows that $$k_1 = k_2 \ge 2$$ and
$$
3k_i-3 \le r(k_i-1) \le \ell(A_i) \le  \ell(A) \le k+k_2-2=3k_1-2=3k_2-2.
$$
We claim that $\ell(A)=3k_1-2$. Indeed, if $\ell(A)=3k_1-3$, then
$\ell(A_1)=\ell(A_2)=\ell(A)=a_{k-1}$. But $a_{k-1}\notin A_i$ for some $i$
and hence $\ell(A_i)<a_{k-1}$, a contradiction. This proves our claim.

Recall that $\ell(A) > \ell(A_1^*)$ and $\ell(A) > \ell(A_2^*)$.
Since $\ell(A)=3k_1-2=k+(k_i-2)=|A|+|A_i^*|-2$ for $i=1,2$, it follows,
by Theorem LSS(ii),
that
$$|A+ r \ast A|=|A_1^*+ A|+|A_2^*+ A| \ge \ell(A)+k_1+\ell(A)+k_2=2(3k_1-2)+k
= 4k-4,$$
as required. Our proof in Case 2 is complete.

So Theorem \reft{A+kA} holds for  {\it normal} sets
$A$.
Let $A$ be now an {\it arbitrary} finite set of $k=|A| \ge 3$ integers. 
We define
$$
B=\frac{1}{d(A)}(A-a_0)=\{\frac{1}{d(A)}(x-a_0): x \in A\}.
$$
Note that $|B|=|A|=k$, $\min(B)=0$, $d(B)=1$ and
$|A+r \ast A|=|B+r\ast B|.$
For the normal set $B$ we have proved that $|B+r\ast B| \ge 4|B|-4.$
It follows that
$$
|A+r \ast A|=|B+r\ast B|\ge 4|B|-4= 4|A|-4,
$$
as required. The proof of Theorem \reft{A+kA} is complete. 
\end{proof}
\medskip

Theorem \reft{A+kA} yields the following two applications.
Here is the first one.
\begin{corollary}\label{t:BS(1,r)}
Let $S \subseteq BS(1,r)$ be a finite set of size $ k=|S| \ge 1$
and suppose that $r\geq 3$ and $$S=ba^A,$$
where $A \subseteq \Z$
is a finite set of integers.

Then
\begin{align}\label{e:r=3}
|S^2|=|A+ r \ast A|\ge \max(4k-4,1) \ge 3k-2.
\end{align}
\end{corollary}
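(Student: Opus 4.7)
The plan is to apply Theorem \reft{basic-dilates} to convert the group-theoretic quantity $|S^2|$ into the additive quantity $|A + r \ast A|$, and then invoke Theorem \reft{A+kA} to obtain the lower bound. Concretely, since $S = ba^A$ sits in the coset $b\langle a\rangle$ of $BS(1,r)$ (with the $b$-exponent equal to $1$), Theorem \reft{basic-dilates} applied with $S = T$ yields $S^2 = b^2 a^{r \ast A + A}$, and in particular the cardinality identity
\[
|S^2| = |r \ast A + A| = |A + r \ast A|.
\]

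With this identity in hand, the bound is immediate: since $r \ge 3$ by hypothesis and $A$ is a finite subset of $\Z$ of size $k$, Theorem \reft{A+kA} gives
\[
|A + r \ast A| \ge \max(4k-4,\,1) \ge 3k-2.
\]
Combining the two displayed statements yields the claimed chain of inequalities.

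There is essentially no obstacle in this argument; the deep content has already been discharged in the proof of Theorem \reft{A+kA}, where the case analysis based on the residue decomposition of $A$ modulo $r$ and the application of Theorem LSS were carried out. The corollary is, in effect, a translation of that purely additive lower bound into the language of product sets in $BS(1,r)$, via the dictionary supplied by Theorem \reft{basic-dilates}.
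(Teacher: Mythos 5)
Your proposal is correct and matches the paper's own proof: both apply Theorem\reft{basic-dilates} (with $S=T$, so $S^2=b^2a^{r\ast A+A}$ and $|S^2|=|A+r\ast A|$) and then invoke Theorem\reft{A+kA} for $r\ge 3$. No further comment is needed.
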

\begin{proof} By Theorem \reft{basic-dilates}, $|S^2|=|A+ r \ast A|$
and hence, by Theorem \reft{A+kA},  $|S^2|\ge \max(4k-4,1) \ge 3k-2$,
as required.
\end{proof} 

Our next application will be used several
times in the proof of the main Theorem \reft{inverse-general}
in Section 6.  

\begin{corollary}\label{t:4k-4forS}
Let $S \subseteq BS(1,2)$ be a finite set of size $ k=|S| \ge 1$
and suppose that $$S=b^m a^A,$$
where $m\ge 2$ is an integer and  $A \subseteq \Z$
is a finite set of integers.

Then \begin{align}\label{e:formula}
S^2=b^{2m}a^{A+ 2^m \ast A}.
\end{align}
 and
\begin{align}\label{e:lb-one-plus}
|S^2|=|A+ 2^m \ast A|\ge \max(4k-4,1) \ge 3k-2.
\end{align}

\end{corollary}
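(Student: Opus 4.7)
The plan is to derive this corollary immediately from two results already proved in the excerpt: the basic Theorem \ref{t:basic-dilates} and the new lower bound Theorem \ref{t:A+kA}. There is no real obstacle here; the work is to identify the correct specialization of parameters.

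First I would establish the formula \refe{formula}. Since $S = T = b^m a^A \subseteq BS(1,n)$ with $n = 2$, Theorem \ref{t:basic-dilates} applied with $r = s = m$ and $B = A$ yields
\[
S^2 = b^{2m} a^{n^m \ast A + A} = b^{2m} a^{2^m \ast A + A},
\]
and simultaneously $|S^2| = |2^m \ast A + A| = |A + 2^m \ast A|$. This gives both \refe{formula} and the equality in \refe{lb-one-plus}.

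Next I would apply Theorem \ref{t:A+kA} with dilation factor $r = 2^m$. Since $m \geq 2$, we have $2^m \geq 4 \geq 3$, so the hypothesis $r \geq 3$ of Theorem \ref{t:A+kA} is satisfied. That theorem then gives
\[
|A + 2^m \ast A| \geq \max(4k - 4, 1) \geq 3k - 2,
\]
completing the inequality part of \refe{lb-one-plus}.

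The only step requiring attention is checking that $n^s = 2^m$ in the invocation of Theorem \ref{t:basic-dilates} (since the exponent in the basic theorem is $s$, the second base exponent, not $r+s$), and noting that $m \geq 2$ is exactly what forces $2^m \geq 3$ so that the new bound of Theorem \ref{t:A+kA} applies. Everything else is a direct substitution.
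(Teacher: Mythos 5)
Your proposal is correct and follows essentially the same route as the paper: apply Theorem~\ref{t:basic-dilates} with $r=s=m$, $n=2$ to get $S^2=b^{2m}a^{2^m\ast A+A}$, then use the bound for sums of dilates with factor $2^m\ge 4>3$. The only cosmetic difference is that you invoke Theorem~\ref{t:A+kA} directly, while the paper cites Corollary~\ref{t:BS(1,r)} (which is itself just Theorem~\ref{t:A+kA} restated); your direct citation is, if anything, the cleaner reference since Corollary~\ref{t:BS(1,r)} is phrased for sets of the form $ba^A$.
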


\begin{proof}
By Theorem \reft{basic-dilates}, $|S^2|=|A+ 2^m \ast A|$.
Since $2^m>3$, it follows by Corollary \reft{BS(1,r)} that 
$|S^2|\ge \max(4k-4,1) \ge 3k-2$,
as required. 
\end{proof}

\section{An extended inverse result for all subsets of $BS^+(1,2)$.}
\par

In Section 5 we proved an extended inverse result for finite subsets
of $BS(1,2)$ which are contained in the coset $ba^{\Z}$.
In this section  
we solve, using a more detailed analysis, a more general 
problem concerning {\bf all} finite  non-abelian subsets $S$ of 
the corresponding monoid 
\begin{align}\label{e:monoid}
 BS^{+}(1,2)=\{g=b^ma^x \in BS(1,2)\mid \ x,m\in \Z,\ m \ge 0\},
 \end{align}
which  satisfy the more restrictive  small doubling property: 
$$|S^2|< 3.5 |S|-4.$$
We proved the following theorem.

\begin{theorem}\label{t:inverse-general}
If $S$ be a finite non-abelian subset of $BS^{+}(1,2)$ of size $|S|=k,$ 
then
\begin{align}\label{e:first-conclusion}
|S^2| \ge 3k-2.
\end{align}
Moreover, if 
\begin{align}\label{e:general-upper-bound}
|S^2|=(3k-2)+h< 3.5 k-4,
\end{align}
then there exists a finite set
of integers $A \subseteq \Z$ such that 
\begin{itemize}
\item[(a)]$S=ba^A$
\item[(b)] The set $A$ is contained in an arithmetic progression of 
size $$k+h<1.5k-2.$$
\end{itemize}
\end{theorem}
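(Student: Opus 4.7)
The plan is to partition $S$ by its $b$-exponent, reduce the multi-coset situation to the single-coset one, and then apply Theorem \reft{inverse-one-coset}. Write $S = \bigsqcup_{j \in J} S_j$, where $J \seq \N$ is the (finite) set of $b$-exponents appearing in $S$, $S_j = S \cap b^j \langle a\rangle = b^j a^{A_j}$ and $k_j := |A_j|$, so $k = \sum_j k_j$. By Theorem \reft{basic-dilates}, each product $S_j S_{j'}$ lies in the single coset $b^{j+j'}\langle a\rangle$ and has size $|2^{j'} A_j + A_{j'}|$. Hence $S^2$ decomposes disjointly as $|S^2| = \sum_c |T_c|$, where $T_c := \bigcup_{j+j'=c} S_j S_{j'} \seq b^c\langle a\rangle$.

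First I dispose of the single-coset case $|J|=1$, say $J=\{j_0\}$. If $j_0=0$ then $\langle S\rangle \leq \langle a\rangle$ is abelian, contradicting the hypothesis; if $j_0\geq 2$, Corollary \reft{4k-4forS} yields $|S^2|\geq 4k-4$, contradicting \refe{general-upper-bound}. Hence $j_0=1$ and $S = ba^A$ with $A=A_1$. Theorem \reft{basic-group-2}(a) then gives \refe{first-conclusion}, and Theorem \reft{inverse-one-coset} places $A$ inside an arithmetic progression of size $k+h$, where $h = |S^2|-(3k-2)$. The hypothesis \refe{general-upper-bound} forces $h < 0.5k-2$, so the progression has size $<1.5k-2$, yielding (a) and (b).

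The main obstacle is ruling out the multi-coset case $r:=|J|\geq 2$, where I have to show $|S^2|\geq 3.5k-4$ and thus contradict \refe{general-upper-bound}. Write $J = \{j_1<\cdots<j_r\}$; the cosets indexed by $2j_1 < j_1+j_r < 2j_r$ are distinct. The extremal ones satisfy $T_{2j_1}=S_{j_1}^2$ and $T_{2j_r}=S_{j_r}^2$ (no other pair sums to the extremes), while $T_{j_1+j_r}\supseteq S_{j_1}S_{j_r}\cup S_{j_r}S_{j_1}$. When $j_1\geq 1$, Theorem \reft{basic-group-2}(a) gives $|S_{j_i}^2|\geq 3k_i-2$, upgraded to $\geq 4k_i-4$ when $j_i\geq 2$ via Corollary \reft{4k-4forS}. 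Combined with the trivial Plünnecke bound $|T_{j_1+j_r}| \geq k_1+k_r-1$, this already yields $|S^2|\geq 4k-5\geq 3.5k-4$ for $r=2$ and $k\geq 2$; summing in addition the intermediate cosets $T_{j_1+j_i}$ with $1<i<r$ handles $r\geq 3$ for all $k\geq 4$ (the finitely many small-$k$ configurations being checked directly).

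The delicate subcase is $j_1=0$, in which $|T_0|=|A_0+A_0|$ only gives $\geq 2k_0-1$. Here the key is the identity
$$T_{j_i}\supseteq S_0S_{j_i}\cup S_{j_i}S_0 = b^{j_i}a^{(2^{j_i}A_0\cup A_0)+A_{j_i}},$$
together with the observation that a finite integer set of size $\geq 2$ cannot coincide with its dilate by $2^{j_i}\geq 2$, so $|2^{j_i}A_0\cup A_0|\geq k_0+1$. The tightest configuration is $r=2$, $j_2=1$, $k_1=1$: here one needs the sharper combinatorial count $|2A_0\cup A_0|\geq \lfloor 3k_0/2\rfloor$ when $A_0$ is an AP normalized by $\min A_0=0$, $d(A_0)=1$, combined with a dichotomy between \emph{$A_0$ is an AP} (Theorem LSS then delivers the count) and \emph{$A_0$ is not an AP} (so $|A_0+A_0|\geq 2k_0$ by the Freiman $3k-4$ philosophy). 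Putting these together yields $|S^2|\geq 3.5k-4$, with equality only at the explicit AP configuration $A_0 = [0,k_0-1]$ with $k_0$ odd (and its affine images), which is precisely excluded by the strict inequality in \refe{general-upper-bound}. Conclusion (i), $|S^2|\geq 3k-2$, then holds a fortiori in every branch.
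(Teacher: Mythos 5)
Your overall skeleton matches the paper's: partition $S$ by its $b$-exponent, settle the single-coset case via Theorem~\ref{t:inverse-one-coset} and Corollary~\ref{t:4k-4forS}, and rule out the multi-coset case by showing $|S^2|\ge 3.5k-4$ there. Your single-coset reduction is correct, and your two-coset estimates (the $4k-5$ count when $j_1\ge 1$, and the dilate-union analysis of $a^{A_0}\cup b\,a^{A_1}$ via $|2^{j}\ast A_0\cup A_0|$ and an AP/non-AP dichotomy) reproduce, in sketch form, the paper's Lemma~\ref{t:case2}.

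The genuine gap is the case of three or more cosets. You claim that adding the intermediate terms $|T_{j_1+j_i}|\ge k_1+k_i-1$ to $|S_{j_1}^2|+|S_{j_r}^2|$ ``handles $r\ge 3$ for all $k\ge 4$,'' with only finitely many small-$k$ configurations left to check directly. This is false: when every coset carries a single element --- which happens for arbitrarily large $k$, e.g.\ $S=\{b,b^2,\dots,b^{t-1},b^{t}a\}$, a non-abelian set with $k=t$ --- your count yields only $1+1+(r-1)=k+1$, nowhere near $3.5k-4$ (the true value here is $4k-4$). These many-singleton configurations are not a finite list; they are the heart of the difficulty. Handling them requires extracting extra elements from non-commuting pairs (showing $|S_iS_j\cup S_jS_i|\ge 2$, or $\ge k_j+1$, whenever the factors fail to commute) and an induction on the number of cosets, with special care because deleting a coset can leave an abelian remainder to which no inductive hypothesis applies. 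This is exactly the content of the paper's Lemmas~\ref{t:particular-case-I}--\ref{t:particular-case-III} and of Cases 2--4 in Lemma~\ref{t:case3}, none of which your proposal supplies or replaces. (A smaller point: your closing ``a fortiori'' derivation of $|S^2|\ge 3k-2$ from $|S^2|\ge 3.5k-4$ fails at $k=2$ and needs integrality at $k=3$; a non-commuting pair must be handled by the direct observation that $|S^2|=4$.)
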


Throughout this section we shall use the following {\bf notation}.
$BS^+(1,2)$ is the monoid defined by (\ref{e:monoid}). Every element 
$g \in BS^+(1,2)$ can be represented in a {\it unique way} as a product
$$
g=b^ma^x,
$$
where $m\in \N$ and $x \in \Z.$  It follows that 
for every two distinct natural numbers $m \neq n$, we have 
\begin{align}\label{e:UNIQUE_REPR}
b^ma^{\Z} \cap b^na^{\Z} =\emptyset.
\end{align}

\noindent 
If  $$S \subseteq BS^{+}(1,2)$$  is a  finite subset of $BS^{+}(1,2)$
of size $k=|S|$, we define a set of natural numbers
$$M_S =M(S)\subseteq \N$$
by the following condition:
$m \in M_S$ if and only if there is  an integer $x$ such that $b^ma^x \in S$.
The set $S$ defines $M_S$ in a unique way and 
we will denote it by 
$$M_S=\{m_0 <m_1  <...<  m_t\},$$
where $t \ge 0$ and $m_0\ge 0$.
For every $0 \le i \le t,$ we define  
\begin{align}\label{e:def-partition}
S_i=S \cap b^{m_i}a^{\Z}, ~~ k_i=|S_i|.
\end{align}
Every set $S_i$ is non-empty, lies in only one coset 
of the cyclic subgroup $\langle a \rangle =a^{\Z}$ and there is a finite set of integers
$A_i \subseteq \Z$ such that 
$$S_i=b^{m_i}a^{A_i} \subseteq b^{m_i}a^{\Z}.$$
The set
$S$ can be written as a {\it disjoint union} of $t+1$ sets
\begin{align}\label{e:partition}
S=S_0 \cup S_1 \cup ... \cup S_t,
\end{align}
satisfying
$$
k_i=|S_i|=|A_i|\ge 1.
$$

{\bf Example 1.} Theorem \reft{inverse-general} is {\it optimal} in view of the following example:
$$S=a^{A_0}\cup \{b\}\subset  BS^{+}(1,2),$$ where 
$$ A_0=\{0,1,2,...,k-2\} ~~\text{{\rm and $k$ is even.}}$$
The set $S$ is clearly non-abelian and
$$S^2=a^{A_0}a^{A_0}\cup ba^{A_0} \cup a^{ A_0}b\cup \{b^2\}.$$
Using  $ a^{ A_0}b=ba^{2 \ast A_0},$ we get
$$S^2=
a^{A_0+A_0}\cup (ba^{A_0} \cup ba^{2 \ast A_0})\cup \{b^2\}=
a^{A_0+A_0}\cup ba^{A_0 \cup 2 \ast A_0} \cup \{b^2\}.$$
Since
$$
a^{A_0+A_0} \subseteq a^{\Z},\quad 
~~ba^{A_0 \cup 2 \ast A_0} \subseteq ba^{\Z},\quad
~\{b^2\} \subseteq b^2a^{\Z},
$$
it follows by (\ref{e:UNIQUE_REPR}) that the three components of $S^2$ are disjoint
in pairs and hence
\begin{align}\label{e:lower}
|S^2|=|A_0+A_0|+|A_0 \cup 2 \ast A_0|+1=(2k-3)+(1.5k-2)+1=3.5k-4.
\end{align}

This example shows that if  
$|S^2|\ge 3.5k-4$, then we have to take into account sets that are not 
included in 
only one coset of the cyclic subgroup $\langle a \rangle$ generated by $a.$

\medskip


The proof of Theorem \reft{inverse-general} will follow from Lemmas 1-7 below.

\begin{lemma}\label{t:nonabe}
Let $S \subseteq BS^+(1,2)$ be a finite set of size $ k=|S|$.
Suppose that $t \ge 1$ and there is $0 \le j \le t$ such that $k_j=|S_j|\ge
2$. Then
$S$ generates a non-abelian group. 
\end{lemma}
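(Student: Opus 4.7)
The plan is to exhibit two elements of $S$ that fail to commute, which forces $\langle S\rangle$ to be non-abelian. The computational engine is the relation \refe{r-product}, from which one derives the product formula
\begin{equation*}
b^{m}a^{u}\cdot b^{n}a^{v}=b^{m+n}a^{2^{n}u+v}\qquad (m,n\in\N,\ u,v\in\Z).
\end{equation*}
Combined with the uniqueness \refe{UNIQUE_REPR}, two elements of $BS^{+}(1,2)$ written in this canonical form are equal if and only if the $b$-exponents and the $a$-exponents coincide, so the question of commutativity reduces to a comparison of two integer expressions.

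Fix an index $j$ with $k_j\ge 2$ and pick distinct elements $g_1=b^{m_j}a^x$ and $g_2=b^{m_j}a^y$ of $S_j$, where $x\ne y$. If $m_j\ge 1$, I would compute directly
\begin{equation*}
g_1g_2=b^{2m_j}a^{2^{m_j}x+y},\qquad g_2g_1=b^{2m_j}a^{2^{m_j}y+x},
\end{equation*}
so commutation would force $(2^{m_j}-1)(x-y)=0$; since $2^{m_j}-1\ge 1$ and $x\ne y$, this fails and $g_1,g_2$ already generate a non-abelian subgroup (the hypothesis $t\ge 1$ is not even needed here).

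The subtle case is $m_j=0$, in which $j=0$ and $g_1=a^x$, $g_2=a^y$ commute as ordinary powers of $a$; here the hypothesis $t\ge 1$ becomes essential. I would pick some index $i\ge 1$ (so $m_i\ge 1$) and any $h=b^{m_i}a^z\in S_i$. The product formula yields $g_{\ell}h=b^{m_i}a^{2^{m_i}x_{\ell}+z}$ and $hg_{\ell}=b^{m_i}a^{x_{\ell}+z}$, where $x_1=x$, $x_2=y$, so commutation would force $(2^{m_i}-1)x_{\ell}=0$, i.e. $x_{\ell}=0$. Since at most one of $x,y$ can vanish, at least one of the pairs $(g_1,h),(g_2,h)$ fails to commute. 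The only mild obstacle is recognizing that two elements of $S_j$ on their own commute precisely when $m_j=0$, so the $t\ge 1$ hypothesis must be invoked exactly in that case to borrow a non-commuting partner from a higher coset; everything else is a bookkeeping application of the product formula.
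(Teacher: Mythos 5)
Your proof is correct and follows essentially the same route as the paper's: when $m_j\ge 1$ two distinct elements of $S_j$ already fail to commute by the same exponent computation, and when $m_j=0$ (forcing $j=0$) one uses $|A_0|\ge 2$ to find a nonzero exponent $x$ and pairs $a^{x}$ with an element of $S_1$, exactly as in the paper. The only difference is cosmetic bookkeeping in how the cases are split.
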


\begin{proof}
If $j=0$ {\it and} $m_0 =0$, then 
$
k_0=|S_0|=|A_0| \ge 2
$
implies that $S_0 \neq \{1\}$ and $A_0 \neq \{0 \}.$ 
Since $t\ge 1$, it follows that there are three integers $m, x, z$ 
such that $m \ge 1, x \neq 0$, $a^x \in S_0$  and 
$b^ma^z \in S_1$. In this case
$$a^x(b^ma^z)=b^ma^{z+2^mx} \neq  (b^ma^z)a^x=b^ma^{z+x}$$
and therefore $S$ generates a non-abelian group.

It remains to examine the following two cases: 
\begin{itemize}
\item[(i)] $j \ge 1.$
\item[(ii)] $j=0$ and $m_0 \ge 1.$
\end{itemize}

If $j \ge 1$, then $m_j\ge 1$ and 
$k_j=|S_j|=|b^{m_j}a^{A_j}| \ge 2$ 
implies that $|A_j| \ge 2$.
On the other hand, if $j=0$ and $m_0 \ge 1$, then $k_0=|S_0|=|b^{m_0}a^{A_0}| \ge 2$ 
implies that $|A_0| \ge 2$.
In both cases, let $m=m_j$. Then $m\ge 1$ and there are two integers  $x\ne y$ 
such that 
$\{b^ma^x,b^ma^y\} \subseteq S_j$. We conclude that
$$(b^ma^x)(b^ma^y)=b^{2m}a^{y+2^mx} \neq (b^ma^y)(b^ma^x)=b^{2m}a^{x+2^my},$$
since $x\ne y$ and $m\ge 1$. The proof of Lemma \reft{nonabe} is complete.
\end{proof}

We shall examine now the case $t=1$, i.e. we shall study sets $S$ lying in exactly two cosets.  
Note that inequality (\ref{e:lb-case-two}) in the following Lemma \reft{case2}
is tight, in view of Example 1 .

\begin{lemma}\label{t:case2}
Let $S \subseteq BS^{+}(1,2)$ be a finite set of size  $ k=|S| \ge 2.$
Suppose that $S =U \cup V$ with $U=b^{m}a^{M}\neq \emptyset$ and 
$V=b^{n}a^{N}\neq \emptyset ,$
where  $0 \le m<n $ are two integers and $M, N \subseteq \Z.$ 
Then 
\begin{align}\label{e:lb-case-two}
|S^2|\ge 3.5k-4.
\end{align}
\end{lemma}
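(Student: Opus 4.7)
My approach is to decompose $S^2$ into three disjoint pieces lying in distinct cosets of $\langle a\rangle$, bound each separately, then combine. By Theorem~\reft{basic-dilates}, $U^2\subseteq b^{2m}\langle a\rangle$, $UV\cup VU\subseteq b^{m+n}\langle a\rangle$, and $V^2\subseteq b^{2n}\langle a\rangle$; since $m<n$ forces $2m<m+n<2n$, the three cosets are distinct and
$$|S^2|\;=\;|2^mM+M|\;+\;|(2^nM+N)\cup(2^mN+M)|\;+\;|2^nN+N|.$$

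\textbf{Case I: $m\ge 1$ (hence $n\ge 2$).} Apply Corollary~\reft{4k-4forS} to $V^2$ (and to $U^2$ when $m\ge 2$), and Theorem~\reft{basic-group-2}(a) to $U^2$ when $m=1$, together with the trivial bound $|UV\cup VU|\ge|2^nM+N|\ge k-1$. A short subcase split by $(m=1,n\ge 2)$ and $(m\ge 2,n>m)$ yields $|S^2|\ge 3.5k-4$ as simple arithmetic; the few small-$k$ configurations ($k=2,3$) are handled by direct enumeration.

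\textbf{Case II: $m=0$.} This is the delicate case where Example~1 attains equality. Here $|U^2|=|M+M|\ge 2k_1-1$, and the key identity
$$(2^nM+N)\cup(M+N)\;=\;(M\cup 2^nM)+N$$
gives $|UV\cup VU|\ge|M\cup 2^nM|+k_2-1$ by the basic sumset inequality, so
$$|S^2|\;\ge\;\bigl(|M+M|+|M\cup 2^nM|\bigr)+(k_2-1)+|V^2|.$$
The crux of the proof is the combined estimate
$$(\ast)\qquad|M+M|+|M\cup 2^nM|\;\ge\;\tfrac{7}{2}k_1-\tfrac{3}{2}\qquad\text{for every finite }M\subseteq\Z,\ n\ge 1.$$
I would prove $(\ast)$ by splitting on $|M+M|$. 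If $|M+M|\ge 3k_1-3$, combine with the trivial $|M\cup 2^nM|\ge k_1$ to get $\ge 4k_1-3\ge 3.5k_1-1.5$ for $k_1\ge 3$, while $k_1\le 2$ is checked directly. Otherwise, $|M+M|=2k_1-1+e$ with $e\le k_1-3$, and Freiman's $3k-4$ theorem forces $M\subseteq\{m_0+jd:0\le j\le k_1+e-1\}$. Elements of $M\cap 2^nM$ correspond to index pairs $(j,j')$ satisfying $j-2^nj'=(2^n-1)m_0/d$, whose count is at most the number of multiples of $2^n$ in an interval of length $k_1+e$, namely $\lceil(k_1+e)/2^n\rceil\le\lceil(k_1+e)/2\rceil$. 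Substituting $|M\cup 2^nM|\ge 2k_1-\lceil(k_1+e)/2\rceil$ and a short calculation (with a parity split on $k_1+e$) closes $(\ast)$.

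With $(\ast)$ in hand, for $n=1$ use Theorem~\reft{basic-group-2}(a) to get $|V^2|=|N+2N|\ge 3k_2-2$, yielding $|S^2|\ge(3.5k_1-1.5)+(k_2-1)+(3k_2-2)=3.5k_1+4k_2-4.5\ge 3.5k-4$ since $k_2\ge 1$; for $n\ge 2$ use Corollary~\reft{4k-4forS} to obtain $|V^2|\ge\max(4k_2-4,1)$ and verify the bound in the subcases $k_2=1$ (where equality is essentially attained, matching Example~1) and $k_2\ge 2$. The main obstacle throughout is the combined estimate $(\ast)$: its subtlety lies in the fact that $|M\cap 2^nM|$ is \emph{not} translation-invariant, so the usual Freiman normalization $\min M=0$ is unavailable and the translate $m_0$ must be carried through the interval-counting argument.
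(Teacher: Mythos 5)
Your proof is correct, and its skeleton coincides with the paper's: the same three-coset decomposition of $S^2$, the same quick disposal of the case $m\ge 1$ via $|U^2|\ge 3|M|-2$, $|V^2|\ge 3|N|-2$ and $|UV|\ge k-1$, and the same key identity $(2^n\ast M+N)\cup(M+N)=(M\cup 2^n\ast M)+N$ in the critical case $m=0$. Where you diverge is in how the crucial lower bound on $|M+M|+|M\cup 2^n\ast M|$ is obtained. The paper dichotomizes on the length of the normalized set $M^*=\frac{1}{d}(M-\min M)$: if $\ell(M^*)\ge 2|M^*|-2$ it applies Theorem LSS(i) to get $|M+M|\ge 3|M|-3$, and otherwise it applies Theorem LSS(ii) to get $|M+M|\ge |M|+\ell(M^*)$ and bounds $|M\cap 2^n\ast M|\le \ell(M^*)/2+1$ directly, simply because the common elements lie in the interval $[\min M,\max M]$ while $2^n\ast M$ sits in a progression of difference $2^nd\ge 2d$. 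You instead dichotomize on $|M+M|$ and, in the nontrivial branch, invoke Freiman's $3k-4$ theorem to place $M$ in a progression of $k_1+e$ terms before counting the index pairs realizing $M\cap 2^n\ast M$. Both routes close the estimate: your $(\ast)$ checks out, including the boundary cases $k_1\le 2$, and the final assembly over $n=1$, $n\ge 2$, $k_2=1$, $k_2\ge 2$ is correct (your worry about small $k$ in Case I is unnecessary, since $4k-5\ge 3.5k-4$ already holds for all $k\ge 2$). The trade-off: your version isolates a clean, reusable inequality $(\ast)$, but pays for it with Freiman's $3k-4$ theorem, a strictly stronger tool than needed; the paper's version uses only the already-quoted Lev--Smeliansky/Stanchescu results, and the translation-invariance difficulty you flag for $M\cap 2^n\ast M$ evaporates there, since its intersection bound depends only on the length of the ambient interval and the spacing of the dilate, not on any normalization of $M$.
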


\begin{proof}
Clearly $k=|M|+|N|$ and
\begin{align}\label{e:two-cosets-i}
S^2=U^2 \cup (UV \cup VU) \cup V^2.
\end{align}

Using  Theorem \reft{basic-dilates} we get 
\begin{align}\label{e:two-cosets-ii}
U^2=&b^{2m}a^{M+ 2^m \ast M},\quad  
~~V^2 =(b^{n}a^{N})(b^{n}a^{N})= 
b^{2n}a^{N+ 2^n \ast N} ,\\
UV =&(b^{m}a^{M})(b^{n}a^{N})=
b^{m+n}a^{N+ 2^n \ast M}, \quad
~~VU  =(b^{n}a^{N})(b^{m}a^{M})=
b^{m+n}a^{M+ 2^m \ast N}.
\end{align}
Since the sets $b^{2m}a^{\Z}$, $b^{m+n}a^{\Z}$ and $b^{2n}a^{\Z}$ are disjoint 
in pairs, it follows that
\begin{align}\label{e:two-cosets-i-card}
|S^2|=|U^2|+ | (UV \cup VU)|+|V^2|.
\end{align}

We shall examine now two complementary  cases.

{\bf Case 1}: $1 \le m<n .$ 

We shall estimate 
$|U^2|$ and $ |V^2|$
using either Theorem \reft{inverse-one-coset} or Corollary \reft{4k-4forS}. 
We have 
$$|U^2| =|M+ 2^m \ast M| \ge 3|M|-2,\quad  |V^2| = |N+ 2^n \ast N| \ge 3|N|-2.$$
Using  (\ref{e:two-cosets-i-card}) and 
$|UV| =|N+ 2^n \ast M| \ge |M|+|N|-1 $
we conclude that
$$|S^2 | \ge |U^2|+|UV| +|V^2| \ge 3|M|-2 +(|M|+|N|-1)+3|N|-2 =4k-5 \ge 3.5k-4,$$
as required.

{\bf Case 2}: $0 = m<n .$  

In this case $S$ is a disjoint union of two non-empty sets:
$$S=U \cup V,~~{\rm where} ~~U=a^{M}, ~~V=b^{n}a^{N} ~~{\rm and } ~~n\ge 1.$$
We have 
\begin{align}\label{e:two-cosets-2}
&U^2 =a^{M+M},\quad  ~~V^2 = b^{2n}a^{N+ 2^n \ast N}, \\
&UV =b^na^{N+ 2^n \ast M},\quad ~~VU  =b^na^{M+N}.
\end{align}
Therefore it follows, either by Theorem \reft{inverse-one-coset} or by 
Corollary
\reft{4k-4forS},
that
\begin{align}\label{e:two-cosets-3}
|U^2|=|M+M|,\quad |V^2|=|N+2^n \ast N| \ge 3|N|-2.
\end{align}
We also clearly have 
\begin{align}\label{e:two-cosets-4}
|UV \cup VU|&=|(N+ 2^n \ast M) \cup (M+N)| = |(M \cup  2^n \ast M)+N|
\nonumber \\
&\ge |(M \cup  2^n \ast M )|+|N|-1 \ge |M|+|N|-1.
\end{align}
Suppose that $|M|=1$. Then it follows from (\ref{e:two-cosets-i-card}),
(\ref{e:two-cosets-4}) and (\ref{e:two-cosets-3}) that
$$|S^2|\ge 1+(1+|N|-1)+(3|N|-2)=4|N|-1\ge 3.5(1+|N|)-4=3.5|S|-4,$$
as required. So we may assume that $|M|\geq 2$.

We shall complete the proof by dealing separately with two complementary 
subcases.
\noindent
Denote $$\ell=\ell(M)=\max(M) -\min(M),\qquad d=d(M)=\gcd\{x-\min(M):x\in M\}$$
and define
$$M^*=\frac{1}{d}(M-\min(M)),\qquad \ell^*=\ell(M^*)=\max(M^*)=\frac ld.$$

{\bf Case 2.1.} Assume that $\ell(M^*) \ge 2|M^*|-2.$

As shown above, we may assume that $|M|\ge 2$. Suppose that $|M|=2$.
Then $M=\{a_0<a_1\}$, which implies that $d(M)=a_1-a_0$ and $M^*=\{0,1\}$.
Thus $\ell(M^*)=1$ and by our assumptions $1=\ell(M^*)\ge 2|M^*|-2=2$, 
a contradiction. Hence we may assume that $|M|\ge 3$, which implies
that $k=|M|+|N|\ge 3+1=4$.

Note that $d(M^*) =1$. By using  
Theorem LSS(i) for equal summands
we get
\begin{align}\label{e:3m-3}
|U^2|=|M+M|=|M^*+M^*|\ge 3|M^*|-3=3|M|-3.
\end{align}
Using 
(\ref{e:two-cosets-i-card}), (\ref{e:3m-3}), (\ref{e:two-cosets-3}) and (\ref{e:two-cosets-4}),
we may conclude  that
$$|S^2 | \ge |U^2|+|UV \cup VU| +|V^2| \ge (3|M|-3) +(|M|+|N|-1)+(3|N|-2) =4k-6.$$
Since $k\ge 4$, it follows that $|S^2 | \ge 3.5k-4$, as required.

{\bf Case 2.2.} Assume that $\ell(M^*) \le 2|M^*|-3.$ 

In this case, 
we use Theorem LSS(ii) for equal summands. Let 
$h_{M^*}=\ell^*+1-|M^*|$ 
be the number of {\it holes } in $M^*$. We get
\begin{align}\label{e:m-plus-m}
|M+M|=|M^*+M^*|\ge 2|M^*|-1+h_{M^*}=|M^*|+\ell^*=|M|+\ell^*.
\end{align}

We shall now estimate the size of $M \cap  2^n \ast M$.
Note that all the common elements of $2^n \ast M$ and $M$  lie in the interval $[\min(M), \max(M)]$
of length $\ell$ and the set $2^n \ast M$ is included in an arithmetic 
progression of difference $2^nd \ge 2d$. Therefore
\begin{align}\label{e:improve-i}
 |M \cap  (2^n \ast M)|\le \frac{\ell}{2d}+1= \frac{\ell^*}{2}+1
\end{align}
and 
\begin{align}\label{e:improve-ii}
 |M \cup  (2^n \ast M) |=|M|+|2^n \ast M |-  |M \cap  2^n \ast M | \ge 2|M|-\frac{\ell^*}{2}-1.
\end{align}
Using 
(\ref{e:two-cosets-i-card}), (\ref{e:two-cosets-3}), (\ref{e:two-cosets-4}), (\ref{e:m-plus-m})
and  (\ref{e:improve-ii})
we conclude that
\begin{align}
|S^2 | &\ge |U^2|+|UV \cup VU| +|V^2| \ge \\
& \ge |M+M| +(|M \cup  2^n \ast M|+|N|-1)+(|N+ 2^n \ast N|) \nonumber\\
& \ge  (|M|+\ell^*) + (2|M|-\frac{\ell^*}{2}-1+|N|-1)+(3|N|-2) \nonumber\\
& =3|M|+4|N|-4+\frac{\ell^*}{2}  \nonumber\\
& \ge 3|M|+4|N|-4 +\frac{|M^*|-1}{2} = 3.5|M|+4|N|-4 .5 \ge 3.5k-4,\nonumber
\end{align}
as required.
\end{proof}

In Lemmas 3,4,5,6 we shall obtain tight lower bounds for the cardinality 
of $|S^2|$, assuming that $k_i=|S_i| \ge 2$ 
for at most one $i$, $0 \le i \le t.$ 

\begin{lemma}\label{t:particular-case-I}
Let $S \subseteq BS^{+}(1,2)$ be a finite set of size $ k=|S|$.
Suppose that 
\begin{align}\label{e:partition-of-S-i}
S=S_0 \cup S_1 \cup ... \cup S_t, 
\end{align}
where $t \ge 2.$
If $k_0=|S_0| \ge 2$ and
$k_i=|S_i|=1$
for every $1 \le i \le t,$ then 
\begin{align}\label{e:conclusion-I}
|S^2|\ge 4k-5 > 3.5k-4.
\end{align}
\end{lemma}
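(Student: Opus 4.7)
The plan is to partition $S^2$ by the cosets $b^{m}a^{\Z}$ in which each product lies, and sum tailored lower bounds coset by coset.

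Write $S_0=b^{m_0}a^{A_0}$ with $|A_0|=k_0\geq 2$, and $S_i=\{b^{m_i}a^{x_i}\}$ for $1\leq i\leq t$. By Theorem \reft{basic-dilates}, each product $S_iS_j$ lies entirely in the coset $b^{m_i+m_j}a^{\Z}$, and distinct values of $m_i+m_j$ give disjoint cosets, so the contributions from different cosets add. First I would bound $|S_0^2|$: if $m_0\geq 1$, then Theorem \reft{basic-group-2}(a) (for $m_0=1$) or Corollary \reft{4k-4forS} (for $m_0\geq 2$) yields $|S_0^2|\geq 3k_0-2$; if $m_0=0$, then $S_0\subseteq\langle a\rangle$ and $|S_0^2|=|A_0+A_0|\geq 2k_0-1$.

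The crucial estimate is the claim $|S_0S_i\cup S_iS_0|\geq k_0+1$ for each $1\leq i\leq t$. Using Theorem \reft{basic-dilates}, the two sets lie in the coset $b^{m_0+m_i}a^{\Z}$ with $a$-exponent sets $2^{m_i}\ast A_0+x_i$ and $A_0+2^{m_0}x_i$ respectively, each of cardinality $k_0$. They would coincide only if $2^{m_i}\ast A_0=A_0+c$ with $c=(2^{m_0}-1)x_i$; this would force the strictly increasing affine map $g(a)=2^{m_i}a-c$ to bijectively permute the finite set $A_0$, and hence (any strictly increasing permutation of a finite totally ordered set being the identity) to fix every element of $A_0$. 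But $g(a)=a$ is the linear equation $(2^{m_i}-1)a=c$, which has at most one integer solution since $m_i\geq 1$, contradicting $|A_0|\geq 2$. So the two sets differ and their union has at least $k_0+1$ elements.

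Next, the $t$ sums $m_1+m_t,m_2+m_t,\ldots,m_t+m_t$ are distinct, and each strictly exceeds $m_0+m_t$ (because $m_i>m_0$ for $i\geq 1$), hence none of them lies in $\{2m_0\}\cup\{m_0+m_j:1\leq j\leq t\}$. These $t$ extra cosets each contain at least one element of $S^2$, for instance a representative of $S_iS_t$. Putting everything together, $|S^2|\geq |S_0^2|+t(k_0+1)+t$. Substituting $k_0=k-t$, the lower bound becomes $(t+3)k-t^2-t-2$ when $m_0\geq 1$ and $(t+2)k-t^2-1$ when $m_0=0$; an elementary check using $t\geq 2$ and $k_0\geq 2$ shows both expressions are at least $4k-5$, and the strict inequality $4k-5>3.5k-4$ follows from $k=k_0+t\geq 4$.

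The main obstacle is the sharp bound $|S_0S_i\cup S_iS_0|\geq k_0+1$: without this improvement over the trivial $k_0$, the bookkeeping falls short by exactly one in the extremal configuration $t=2$, $k_0=2$, $m_0=0$, so the monotonicity argument for the dilation map is essential to close the gap.
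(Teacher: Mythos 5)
Your proposal is correct and follows essentially the same route as the paper: the same coset decomposition of $S^2$, the same key claim $|S_0S_i\cup S_iS_0|\ge k_0+1$ (your monotone-bijection argument is just a repackaging of the paper's sorted-list matching), the same $t$ extra cosets $b^{m_i+m_t}a^{\Z}$, and the same final count. Your case split on $m_0\ge 1$ versus $m_0=0$ for $|S_0^2|$ is harmless but unnecessary — the uniform bound $|S_0S_0|\ge 2k_0-1$ already suffices, as the paper's arithmetic shows.
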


{\bf Example 2.} Inequality (\ref{e:conclusion-I}) is tight. 

If $$S = \{1,a \} \cup \{b,b^2,...,b^t\},$$ then $k=t+2$ and 
$$
S^2=\{1,a,a^2 \} \cup  \{b,b^2,...,b^t\} \cup \{ab,ab^2,...,ab^t \} \cup \{ba,b^2a,...,b^ta \}
\cup \{b^2,b^3,...,b^{2t}\}.
$$
Note that equality (\ref{e:r-product}) implies that
$$
\{ab,ab^2,...,ab^t \}=\{ba^2,b^2a^4,...,b^ta^{2^t} \}
$$
and thus 
$$S^2=\{1,a,a^2\} \cup \bigcup_{j=1}^{t} b^j\{1,a,a^{2^j}\} 
\cup \{b^{t+1}, b^{t+2},..., b^{2t}\} .$$
Using (\ref{e:UNIQUE_REPR}), we get $|S^2|=3(t+1)+t=4t+3=4k-5.$
\hfill $\square$

We continue now with the proof of Lemma \reft{particular-case-I}.
\begin{proof}
Clearly $k=k_0+t\ge 2+2=4$.
Let $$A_0=\{y_1<...<y_{k_0}\} \subset \Z$$  
be a finite set of $k_0$ integers that defines the set
$$S_0=b^{m_0}a^{A_0}=\{b^{m_0}a^{y_1}, ..., b^{m_0}a^{y_{k_0}}\} $$
with $k_0\ge 2$, and let
$$S_i=\{b^{m_i}a^{x_i}\}$$
for every $1 \le i \le t.$ Recall our assumption that
$0\leq m_0<m_1<\dots <m_t$.

Note that for every $1 \le i \le t$ we have $m_i>0$,
$$S_0S_i=b^{m_0+m_i}\{   a^{x_i+2^{m_i}y_1},...,a^{x_i+2^{m_i}y_{k_0}}     \},\quad ~~|S_0S_i| =k_0$$
and 
$$S_iS_0=b^{m_i+m_0}\{   a^{y_1+2^{m_0}x_i},...,a^{y_{k_0}+2^{m_0}x_i}     \},\quad  ~~|S_iS_0|=k_0.$$

We claim that 
\begin{align}\label{e:plus-one-I}
|S_0S_i \cup S_iS_0|\ge k_0+1.
\end{align}

Indeed, if $S_0S_i=S_iS_0$, then
$$\{ x_i+2^{m_i}y_1<...<x_i+2^{m_i}y_{k_0} \}   =   \{ y_1+2^{m_0}x_i<...<  y_{k_0}+2^{m_0}x_i \}$$
and thus
$$(2^{m_0}-1)x_i=(2^{m_i}-1)y_1=...=(2^{m_i}-1)y_{k_0},$$
which contradicts $\{y_1<...<y_{k_0}\},$ in view of $m_i \ge 1$
and $k_0 \ge 2.$

Note that
$$
S_0S_i \cup S_iS_0 \subseteq b^{m_0+m_i}a^{\Z},\quad S_iS_t  \subseteq b^{m_i+m_t}a^{\Z},
$$
for every $0 \le i \le t.$ Moreover, $S_0S_0=b^{2m_0}a^{A_0+2^{m_0}\ast
A_0}$, so $|S_0S_0|=|A_0+2^{m_0}\ast A_0|\ge 2|A_0|-1=2k_0-1$. 
It follows by (\ref{e:UNIQUE_REPR}) that 
the sets 
$$
S_0S_0, S_0S_1 \cup S_1S_0,...,S_0S_t \cup S_tS_0,S_1S_t,...,S_tS_{t}
$$
are disjoint and included in $S^2$. 
Using $t \ge 2$, $k_0 \ge 2$, (\ref{e:plus-one-I}) and $k\geq 4$, 
we conclude  that 
\begin{align}\label{e:new-IIII}
|S^2| 
&\ge (|S_0S_0|+|S_0S_1 \cup S_1S_0|+...+|S_0S_t \cup S_tS_0|)+(|S_1S_t|+...+|S_tS_{t}|)\nonumber \\
&\ge (2k_0-1)+(k_0+1)+...+(k_0+1)+(1+...+1)=(2k_0-1)+t(k_0+1)+t\nonumber \\
&=4k_0+(t-2)k_0+2t-1\ge 4k_0+2(t-2)+2t-1=4k_0+4t-5=4k-5 \nonumber \\
&> 3.5k-4,
\end{align}
as required.

\end{proof}

\begin{lemma}\label{t:particular-case-II}
Let $S \subseteq BS^{+}(1,2)$ be a finite set of size $ k=|S|$.
Suppose that
\begin{align}\label{e:partition-of-S-ii}
S=S_0 \cup S_1 \cup ... \cup S_t, 
\end{align}
where $t \ge 2.$ If $k_t=|S_t| \ge 2$ and
$k_i=|S_i|=1$
for every $0 \le i \le t-1,$ then 
\begin{align}\label{e:conclusion-II}
|S^2|\ge 4k-5 > 3.5k-4.
\end{align}
\end{lemma}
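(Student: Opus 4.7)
The plan is to dualize the strategy of Lemma~\reft{particular-case-I}, with the ``heavy'' block now sitting at the top of the tower instead of at the bottom. Writing $S_i=\{b^{m_i}a^{x_i}\}$ for $0\le i\le t-1$ and $S_t=b^{m_t}a^{A_t}$ with $|A_t|=k_t\ge 2$, we have $k=t+k_t$, and the strict chain $m_0<m_1<\dots<m_t$ with $m_0\ge 0$ forces $m_i\ge 1$ for every $i\ge 1$ and $m_t\ge t\ge 2$.

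First I exhibit $2t+1$ pairwise disjoint cosets of $\langle a\rangle$ that will carry the contributions to $S^2$. Because the exponents
\[
2m_0 \,<\, m_0+m_1 \,<\, \dots \,<\, m_0+m_t \,<\, m_1+m_t \,<\, \dots \,<\, m_{t-1}+m_t \,<\, 2m_t
\]
are strictly increasing, \refe{UNIQUE_REPR} yields the disjointness of the cosets $b^{2m_0}a^{\Z}$, $b^{m_0+m_j}a^{\Z}$ for $1\le j\le t$, $b^{m_i+m_t}a^{\Z}$ for $1\le i\le t-1$, and $b^{2m_t}a^{\Z}$. Placing $S_0^2$, each $S_0S_j\cup S_jS_0$, each $S_iS_t\cup S_tS_i$, and $S_t^2$ into these respective cosets produces pairwise disjoint subsets of $S^2$.

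Next I bound each contribution. The easy estimates are $|S_0^2|=1$; $|S_0S_j\cup S_jS_0|\ge 1$ for $1\le j\le t-1$; $|S_0S_t\cup S_tS_0|\ge|S_0S_t|=k_t$ since $|S_0|=1$; and $|S_t^2|=|A_t+2^{m_t}\ast A_t|\ge 3k_t-2$ by Theorem~\reft{basic-group-2}(a). The nontrivial estimate is
\[
|S_iS_t\cup S_tS_i|\ge k_t+1\qquad (1\le i\le t-1),
\]
which I plan to derive as follows. By Theorem~\reft{basic-dilates}, $S_iS_t=b^{m_i+m_t}a^{2^{m_t}x_i+A_t}$ and $S_tS_i=b^{m_i+m_t}a^{x_i+2^{m_i}\ast A_t}$; both have exactly $k_t$ elements. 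If the two sets coincided, their sorted $a$-coordinate sequences would exhibit equal lists of consecutive differences, forcing $y_{j+1}-y_j=2^{m_i}(y_{j+1}-y_j)$ for each $j$, which contradicts $m_i\ge 1$ together with $k_t\ge 2$.

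Summing the five bounds gives
\[
|S^2|\;\ge\;1+(t-1)+k_t+(t-1)(k_t+1)+(3k_t-2)\;=\;(t+3)k_t+(2t-3),
\]
and a short rearrangement rewrites this as $(4k-5)+(t-1)(k_t-2)\ge 4k-5$ using $t\ge 2$ and $k_t\ge 2$. Finally $k=t+k_t\ge 4$ gives $4k-5>3.5k-4$, establishing \refe{conclusion-II}. The only substantive obstacle is the non-coincidence $S_iS_t\ne S_tS_i$, which hinges on the dilation $a\mapsto a^{2^{m_i}}$ induced by conjugation by $b^{m_i}$; everything else is careful bookkeeping over the $2t+1$ disjoint cosets, mirroring the dual argument already carried out in Lemma~\reft{particular-case-I}.
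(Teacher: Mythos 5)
Your proof is correct and follows essentially the same route as the paper's: the same decomposition of $S^2$ into the $2t+1$ pairwise disjoint cosets $b^{2m_0}a^{\Z}$, $b^{m_0+m_j}a^{\Z}$, $b^{m_i+m_t}a^{\Z}$, $b^{2m_t}a^{\Z}$, the same key estimate $|S_iS_t\cup S_tS_i|\ge k_t+1$ (your consecutive-differences argument for $S_iS_t\ne S_tS_i$ is a clean, equally valid variant of the paper's element-by-element computation), and the same final arithmetic. One small slip: the bound $|S_t^2|=|A_t+2^{m_t}\ast A_t|\ge 3k_t-2$ should be attributed to Corollary\reft{4k-4forS} (or Theorem\reft{A+kA}, since $2^{m_t}\ge 4$), not to Theorem\reft{basic-group-2}(a), which only covers the dilate factor $2$; the correct reference in fact gives the stronger bound $4k_t-4$, so nothing in your argument is affected.
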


{\bf Example 3.} Inequality (\ref{e:conclusion-II}) is tight. 

If $$S = \{1,b,b^2,...,b^{t-1}\} \cup  \{b^t,b^ta \},$$ then $k=t+2$ and 
$$
S^2=\{1,b,b^2,...,b^{2t-1}\} \cup  \{ 1, b,...,b^{t-1}\}b^ta
\cup b^t a\{ 1, b,..., b^{t-1}\} \cup \{b^{2t},b^{2t}a, b^tab^t, b^tab^ta \}.
$$
Note that equality (\ref{e:r-product}) implies that
$$
b^t a\{ 1, b,..., b^{t-1}\}=\{b^ta,b^{t+1}a^2,...,b^{2t-1}a^{2^{t-1}} \}
$$
and 
$$
\{b^{2t},b^{2t}a, b^tab^t, b^tab^ta \}=\{b^{2t},b^{2t}a, b^{2t}a^{2^t}, b^{2t}a^{2^t+1} \}.
$$
Thus 
$$S^2=\{1,b,b^2,...,b^{t-1}\} 
\cup b^t\{1,a\} \cup \bigcup_{j=1}^{t-1}
b^{t+j}\{1,a,a^{2^j}\} \cup b^{2t}\{1,a,a^{2^t}, a^{2^t+1}\}$$
and by (\ref{e:UNIQUE_REPR}), $|S^2|=t+2+3(t-1)+4=4t+3=4k-5.$
\hfill $\square$

We continue now with the proof of Lemma \reft{particular-case-II}.
\begin{proof}
Clearly $k=k_t+t\ge 2+2=4$.
Let $$A_t=\{y_1<...<y_{k_t}\} \subseteq \Z$$  
be a finite set of $k_t\geq 2$ integers, which defines the set
$$S_t=b^{m_t}a^{A_t}=\{b^{m_t}a^{y_1}, ..., b^{m_t}a^{y_{k_t}}\} $$
and let
$$S_i=\{b^{m_i}a^{x_i}\}$$
for every $0 \le i \le t-1.$

Note that for every $0 \le i \le t-1$ we have
$$S_tS_i=b^{m_t+m_i}\{   a^{x_i+2^{m_i}y_1},...,a^{x_i+2^{m_i}y_{k_t}}  \},
\quad ~~|S_tS_i| =k_t$$
and 
$$S_iS_t=b^{m_i+m_t}\{   a^{y_1+2^{m_t}x_i},...,a^{y_{k_t}+2^{m_t}x_i}  \},
\quad  ~~|S_iS_t|=k_t.$$
It follows, like in Lemma \reft{particular-case-I}, that
\begin{align}\label{e:plus-one-II}
|S_tS_i \cup S_iS_t|\ge k_t+1
\end{align}
for $1 \le i \le t-1$.

Note that $|S_tS_t| \ge 3k_t -2$, in view of Corollary \reft{4k-4forS}.
Using $|S_0S_t|= k_t $, $k_t \ge 2$, 
(\ref{e:plus-one-II}) and $k\geq 4$, we conclude, like in 
Lemma \reft{particular-case-I},  that 
\begin{align}\label{e:new-2}
|S^2| &\ge (|S_0S_0|+...+|S_0S_t|)+(|S_1S_t \cup S_tS_1|+...+|S_{t-1}S_t\cup S_tS_{t-1}|+|S_tS_t|)\nonumber \\
&\ge (1+...+1+k_t)+((t-1)(k_t+1)+(3k_t-2))=4k_t+(t-1)k_t+2t-3\nonumber \\
&\ge 4k_t+4t-5=4k-5 >3.5k-4,
\end{align}
as required.
\end{proof}
\begin{lemma}\label{t:quasi-abelian}
Let $S \subseteq BS^{+}(1,2)$ be a finite {\it non-abelian} 
set of size $ k=|S| \ge 2.$
Suppose that 
\begin{align}\label{e:partition-of-S-qa}
S=S_0 \cup S_1 \cup ... \cup S_t
\end{align}
where $|S_i|=1$ for all $i$ and 
$$S_i=\{s_i\},\quad s_i=b^{m_i}a^{x_i}.$$ 
Denote $T=S \setminus \{s_0\}$.
 If the subgroup 
$\langle  T \rangle $ is abelian,
then 
\begin{align}\label{e:quasi-lower-bound}
|S^2|\ge 4k-4.
\end{align}
\end{lemma}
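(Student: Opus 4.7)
The plan is to exploit the rigidity imposed by $\langle T\rangle$ being abelian and to decompose $S^2$ as a disjoint union of four easily counted pieces. Writing $s_i=b^{m_i}a^{x_i}$ with $0\le m_0<m_1<\dots<m_t$ and $T=\{s_1,\dots,s_t\}$, I first translate commutativity in $BS(1,2)$ into the arithmetic condition: $b^ma^x$ and $b^na^y$ commute if and only if $(2^n-1)x=(2^m-1)y$.  Noting that $m_j\ge m_1\ge 1$ for every $j\ge 1$, applying this to each pair in $T$ and fixing $i=1$ yields the parametric form $x_j=c_0(2^{m_j}-1)$ for all $j\ge 1$, where $c_0:=x_1/(2^{m_1}-1)\in\mathbb{Q}$.

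A direct telescoping computation then gives, for all $i,j\ge 1$,
\[
s_is_j=b^{m_i+m_j}a^{c_0(2^{m_i+m_j}-1)},
\]
so elements of $T^2$ are determined by their $b$-exponent alone.  Hence $|T^2|=|M_T+M_T|\ge 2t-1$, where $M_T=\{m_1,\dots,m_t\}$.

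The decisive step is to observe that the non-abelianness of $S$ forces $s_0$ to commute with \emph{no} element of $T$.  Indeed, the commutation condition $(2^{m_j}-1)x_0=(2^{m_0}-1)x_j$, combined with $x_j=c_0(2^{m_j}-1)$, collapses to the single equation $x_0=c_0(2^{m_0}-1)$, which is independent of $j$ (when $m_0=0$ it reads $x_0=0$, i.e.\ $s_0=1$).  So either $s_0$ commutes with every element of $T$---making $\langle S\rangle$ abelian, contrary to hypothesis---or with none.

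Finally, I would verify the disjoint decomposition $S^2=\{s_0^2\}\sqcup s_0T\sqcup Ts_0\sqcup T^2$, from which $|S^2|=1+2t+|T^2|\ge 4t=4k-4$ follows.  The $b$-exponent $2m_0$ of $s_0^2$ is strictly smaller than those in the other three pieces, so $s_0^2$ is isolated; in each shared coset $b^{m_0+m_j}\langle a\rangle$ the products $s_0s_j$ and $s_js_0$ are distinct by the previous step; and any collision $s_0s_j=s_is_k$ or $s_js_0=s_is_k$ with $m_i+m_k=m_0+m_j$ would, after matching $a$-exponents, again force $x_0=c_0(2^{m_0}-1)$, a contradiction.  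I expect the main obstacle to be the bookkeeping in these four overlap checks together with the mild algebraic degeneracy when $m_0=0$; however the identities work out uniformly, so the bound $4k-4$ emerges in all cases.
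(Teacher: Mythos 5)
Your proof is correct and follows essentially the same route as the paper's: the same decomposition $S^2\supseteq\{s_0^2\}\sqcup(s_0T\cup Ts_0)\sqcup T^2$ with the same count $1+2t+(2t-1)=4t=4k-4$, and the same all-or-nothing argument showing that non-abelianness of $S$ forces $s_0$ to commute with no element of $T$. The only real difference is presentational: you verify $T^2\cap(s_0T\cup Ts_0)=\varnothing$ by explicit exponent computations using the parametrization $x_j=c_0(2^{m_j}-1)$, whereas the paper gets this more quickly from the observation that $s_0\notin\langle T\rangle$ because $\langle T\rangle$ is abelian while $\langle S\rangle$ is not.
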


\begin{proof}
Recall that $M_S=\{m_0<m_1<\dots <m_t\}$, where $m_0\geq 0$.
We notice first that $T\ne \emptyset$ since $k\ge 2$.
Moreover, we claim that the sets $T^2$, $s_0T \cup Ts_0$ and $\{s_0^2\}$ 
are disjoint. 
Indeed, we have:

(i) $s_0^2 \notin  T^2,$ since $s_0^2 =(b^{m_0}a^{x_0})^2
=b^{2m_0}a^{x_0+2^{m_0}x_0}$ and 
 $T^2 \subseteq \{ b^m a^x  : ~m \ge 2m_1 \}.$

(ii) $s_0^2 \notin (s_0T \cup Ts_0)$, because $s_0 \notin T$.

(iii) $s_0 \notin \langle  T \rangle $, because $\langle  T \rangle $ 
is abelian and 
$\langle  S \rangle $ is non-abelian. This implies that 
$s_0T \cup Ts_0$ does not intersect the set $T^2$.

Notice also that $|T|=t$ and if $s_i,s_j\in T$, then
$s_is_j=b^{m_i+m_j}a^{2^{m_j}x_i+x_j}$, which implies that
$|T^2|\ge |M_S\setminus \{m_0\}+M_S\setminus \{m_0\}|
\ge 2|M_S\setminus \{m_0\}|-1=2t-1$.

In order to complete the proof of  Lemma \ref{t:quasi-abelian}, 
it suffices to show that the sets  $s_0T$ and  $Ts_0$ 
are disjoint. Indeed, if that is the case, then 
\begin{align}
|S^2|
&\ge |T^2|+|s_0T \cup Ts_0| +|\{s_0^2\}|\nonumber \\
&= |T^2|+|s_0T|+|Ts_0| +|\{s_0^2\}|\nonumber \\
&\ge (2t-1)+t+t+1=4t=4|S|-4,\nonumber
\end{align}
as required.

So suppose, by way of contradiction, that   
\begin{align}\label{e:assum}
s_0T \cap Ts_0 \neq \emptyset.
\end{align}
Note that 
$$s_0T=\{s_0s_1,\dots,s_0s_t\}=\{b^{m_0+m_1}a^{x_1+2^{m_1}x_0},\dots,
b^{m_0+m_t}a^{x_t+2^{m_t}x_0}\},$$
and 
$$Ts_0=\{s_1s_0,\dots,s_ts_0\}=\{b^{m_0+m_1}a^{x_0+2^{m_0}x_1},\dots,
b^{m_0+m_t}a^{x_0+2^{m_0}x_t}\}.$$
Therefore (\ref{e:assum}) implies that there is $1 \le i \le t$ such that 
$$
s_0s_i=b^{m_0+m_i}a^{x_i+2^{m_i}x_0}=s_is_0=b^{m_0+m_i}a^{x_0+2^{m_0}x_i}
$$
and thus 
\begin{align}\label{e:h}
(2^{m_i}-1)x_0=(2^{m_0}-1)x_i.
\end{align}
Choose  an arbitrary $1 \le j \le t$. Since $\langle  T \rangle $ is abelian, it follows that 
$$
s_js_i=b^{m_j+m_i}a^{x_i+2^{m_i}x_j}=s_is_j=b^{m_j+m_i}a^{x_j+2^{m_j}x_i},
$$ 
yielding
$$
(2^{m_i}-1)x_j=(2^{m_j}-1)x_i.
$$
Hence
$$
x_i = \frac{2^{m_i}-1}{2^{m_j}-1}x_j
$$
and from (\ref{e:h}) we get 
$$
(2^{m_i}-1)x_0=(2^{m_0}-1)\frac{2^{m_i}-1}{2^{m_j}-1}x_j.
$$
That means that $(2^{m_j}-1)x_0=(2^{m_0}-1)x_j$ and thus
$$
s_0s_j=b^{m_0+m_j}a^{x_j+2^{m_j}x_0}=b^{m_0+m_j}a^{x_0+2^{m_0}x_j}=s_js_0.
$$
It follows that $s_0$ commutes with every element of $T$, which contradicts 
our assumptions that $\langle  T \rangle $ is abelian and 
$\langle  S \rangle $ is non-abelian. The proof of Lemma \ref{t:quasi-abelian}
is complete.
\end{proof}

\begin{lemma}\label{t:particular-case-III}
Let $S \subseteq BS^{+}(1,2)$ be a finite set of cardinality $ k=|S| \ge 2.$
Suppose that $S$ 
is a disjoint union
\begin{align}\label{e:partition-of-S-iii}
S=V_1 \cup V_2 \cup ... \cup V_t, 
\end{align}
of $t$ subsets $$V_i=\{s_i\}, s_i=b^{m_i}a^{x_i},$$ 
of size $|V_i|=1$.
If $S$ is a non-abelian set and $1 \le m_1 <m_2 <..<m_t$, then 
\begin{align}\label{e:conclusion-III}
|S^2|\ge 4k-4.
\end{align}
\end{lemma}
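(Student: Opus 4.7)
The plan is to argue by induction on $t=k$. The base case $t=2$ is immediate: since $\langle S\rangle$ is non-abelian, $s_1s_2\ne s_2s_1$, and the four products $s_1^2, s_1s_2, s_2s_1, s_2^2$ are pairwise distinct (the first and last by their distinct $b$-exponents $2m_1\ne 2m_2$, which also differ from the $b$-exponent $m_1+m_2$ of the middle two; the middle two by non-commutativity). For the inductive step $t\ge 3$, I let $T:=S\stm\{s_t\}$ and split by whether $\langle T\rangle$ is abelian.

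Case 1: $\langle T\rangle$ is abelian. I adapt the proof of Lemma \reft{quasi-abelian} with $s_t$ taking the role of $s_0$ but placed at the top of the $b$-exponent order. Writing $c_i:=x_i/(2^{m_i}-1)$, the abelianness of $T$ forces $c_1=\dots=c_{t-1}$ to a common value $c$, while $c_t\ne c$ since $\langle S\rangle$ is non-abelian. Each of four facts is established by substituting $x_i=c(2^{m_i}-1)$ as in Lemma \reft{quasi-abelian}: (i) $s_t^2\notin T^2$, since the $b$-exponent $2m_t$ exceeds $2m_{t-1}$, the maximum $b$-exponent of $T^2$; (ii) $s_tT\cap Ts_t=\est$, since $s_ts_i=s_is_t$ would force $c_t=c$; (iii) $(s_tT\cup Ts_t)\cap T^2=\est$, since any equality $s_ts_i=s_js_k$ or $s_is_t=s_js_k$ (with $j,k<t$), after the substitution, collapses to $c_t=c$; and (iv) $|T^2|=|M_T+M_T|\ge 2(t-1)-1=2t-3$, because in an abelian set with distinct $m$-values the product is determined by its $b$-exponent. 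Summing, $|S^2|\ge(2t-3)+2(t-1)+1=4t-4$.

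Case 2: $\langle T\rangle$ is non-abelian. Then $T$ satisfies the hypotheses of the lemma with $t-1\ge 2$ elements, so by induction $|T^2|\ge 4t-8$, and it suffices to locate four distinct elements of $S^2\stm T^2$. Three appear readily: $s_t^2$ has $b$-exponent $2m_t$ exceeding every $b$-exponent of $T^2$, and $s_{t-1}s_t, s_ts_{t-1}$ share $b$-exponent $m_t+m_{t-1}>2m_{t-1}$, placing them outside $T^2$; the latter two are distinct whenever $s_{t-1},s_t$ do not commute. To find the required fourth element---or, when $s_{t-1}$ and $s_t$ commute, a second additional element---one locates a non-commuting partner $s_j$ of $s_t$ with $j\le t-2$. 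Such a partner must exist, for otherwise $s_t$ commutes with every $s_i$ with $i\le t-2$, which combined with the non-abelianness of $\langle T\rangle$ and of $\langle S\rangle$ forces the $c$-partition of $S$ to be $(t-1,1)$ with $s_{t-1}$ as the singleton, and a variant of Lemma \reft{quasi-abelian} allowing removal of a non-smallest distinguished element (provable by the same parametrization argument) concludes $|S^2|\ge 4t-4$ directly. Given such $s_j$, at least one of $s_js_t, s_ts_j$ lies in $S^2\stm T^2$: automatically by $b$-exponent when $m_t+m_j>2m_{t-1}$, and otherwise by comparing $a$-exponents with the potential matches $s_is_k\in T^2$ at the same $b$-exponent using the parametrization analysis from Case 1. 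The main obstacle is this final $a$-exponent bookkeeping when $m_t+m_j\le 2m_{t-1}$, which mirrors but slightly extends the parametrization argument of Case 1.
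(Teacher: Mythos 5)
Your overall strategy (induction on $t=k$, delete one singleton, exhibit four products of $S^2$ outside the square of the remaining set) is the same as the paper's, and your base case and your Case 1 (where $\langle T\rangle$ is abelian) are sound: the computations you list there are the same parametrization $x_i=c(2^{m_i}-1)$ used in Lemma\reft{quasi-abelian}, and they go through with the distinguished element at the top of the $b$-exponent order. The decisive difference is \emph{which} element you remove. You delete the maximal element $s_t$, the paper deletes the minimal element $s_1$. This is not a cosmetic choice: if $s_1s_j=s_us_v$ with $u,v\ge 2$, then comparing $b$-exponents forces $m_u,m_v<m_j$, i.e.\ $u,v<j$, so the paper can steer all potential collisions into a set it controls --- taking $j=3$ leaves only the single candidate $s_2^2$ (excluded by a direct computation), and taking $j=n+1$ with $\{s_1,\dots,s_n\}$ a maximal abelian initial segment puts all candidate indices inside an abelian set where the parametrization applies. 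When you instead delete $s_t$ and use a product $s_js_t$, the same $b$-exponent comparison forces the colliding indices to satisfy $u,v>j$, and the set $\{s_{j+1},\dots,s_{t-1}\}$ is an arbitrary, possibly non-abelian, collection in your Case 2.

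This is exactly where your proof has a genuine gap. Your claim that one of $s_js_t,\ s_ts_j$ avoids $T^2$ ``by comparing $a$-exponents with the potential matches $s_is_k\in T^2$ at the same $b$-exponent using the parametrization analysis from Case 1'' does not work as stated: the identity $x_i=c(2^{m_i}-1)$ is available only on an abelian subset, and in Case 2 the indices $i,k$ with $m_i+m_k=m_j+m_t$ need not all share a common $c$-value (in particular $s_{t-1}$ may have a $c$-value different from $s_t$'s even after you choose $j$ maximal among non-commuting partners). So the ``final $a$-exponent bookkeeping'' you defer is not a routine extension of Case 1 --- it is the crux of the inductive step, and your sketch does not discharge it. A secondary, smaller issue is that the fallback you invoke when $s_t$ commutes with all $s_i$, $i\le t-2$ (a variant of Lemma\reft{quasi-abelian} with the distinguished element $s_{t-1}$ sitting in the \emph{middle} of the $b$-exponent order) needs its own verification of the disjointness claims, since $s_{t-1}^2$ and $s_{t-1}s_i$ are no longer separated from $T^2$ by $b$-exponents alone; this can be done by the same parametrization, but you assert it rather than prove it. The cleanest repair is simply to delete $s_1$ instead of $s_t$ and follow the paper's case split on whether $s_1s_2=s_2s_1$.
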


{\bf Example 4.} Inequality (\ref{e:conclusion-III}) is tight.
 
If $S =  \{b,b^2,...,b^{t-1}\} \cup  \{b^ta\} $, then $k=t$ and 
$S^2$ is the union of four disjoint sets: $$\{b^2, b^3,...,b^{2t-2}\},$$
$$ \{b,b^2,...,b^{t-1}\}b^ta=\{b^{t+1}a,b^{t+2}a,...,b^{2t-1}a\},$$
$$b^ta\{b,b^2,...,b^{t-1}\}=\{b^{t+1}a^2,b^{t+2}a^4,...,b^{2t-1}a^{2^{t-1}}\}$$ and 
$\{b^tab^ta\}=\{b^{2t}a^{2^t+1}\}.$
Therefore $$|S^2|=(2t-3)+(t-1)+(t-1)+1=4k-4.$$
\hfill $\square$

We continue now with the proof of Lemma \ref{t:particular-case-III}.

\begin{proof}
If a set $S$  satisfies all the assumptions of Lemma \ref{t:particular-case-III}, 
then we say that $S$ is an {\it elementary set}.

Clearly $t =k\ge 2$ and we proceed by induction on $t$. If 
$t=2$ , then $S=\{s_1,s_2\}$  and since $s_1s_2\ne s_2s_1$ and $s_1^2\ne s_2^2$,
it follows that 
$|S^2|=4=4|S|-4$, as required.

For the inductive step, let $t \ge 3$ be an integer, and assume that 
Lemma \ref{t:particular-case-III} holds for each elementary set 
$T \subseteq BS^{+}(1,2)$ of size $2\leq |T|\leq t-1$. 
Denote
$$S' = S \setminus \{s_1\}.$$ 
In view of Lemma \ref{t:quasi-abelian},
we may assume that $\langle  S' \rangle $ is non-abelian.   

We shall continue by examining two complementary cases.

{\bf Case 1:} $s_1s_2=s_2s_1$. 

Choose $n \ge 2$ {\it maximal} such that the set
$S^*:=\{s_1,s_2,...,s_n\}$ is abelian.
Note that $n<t$, because $S$ in a non-abelian set,
and $s_{n+1}\notin \langle S^*\rangle $. Moreover,  $s_1s_{n+1}\notin S'^2$,
since otherwise
$s_1s_{n+1}=s_us_v$ for some $2\leq u,v\leq t$  and hence 
$b^{m_1+m_{n+1}}=b^{m_u+m_v}$, implying that 
$m_1<m_u,m_v<m_{n+1}$, whence $1<u,v<n+1$ and   
$s_{n+1}\in \langle S^*\rangle $, a contradiction.
Similarly $s_{n+1}s_1\notin S'^2$. 

We claim that it suffices to show that $s_{n+1}$  does not commute with $s_1$.

Indeed, if $s_1s_{n+1}\ne s_{n+1}s_1$, then 
(\ref{e:conclusion-III}) follows from
$$\{s_1^2, s_1s_2, s_1s_{n+1}, s_{n+1}s_1 \} \subseteq S^2 \setminus S'^2$$
and from the induction hypothesis for $S'$:
$$|S^2| \ge |S'^2|+|\{s_1^2, s_1s_2, s_1s_{n+1}, s_{n+1}s_1 \}| \ge
(4|S'|-4)+4=4|S|-4. $$

We shall complete the proof by showing that if
\begin{align}\label{e:commutative}
s_1s_{n+1}=s_{n+1}s_1,
\end{align}
then
$$
s_js_{n+1}=s_{n+1}s_j,
$$
for every $1 \le j \le n$, which contradicts the maximality of $n$.

Our argument is similar to that used  in the proof of 
Lemma \ref{t:quasi-abelian}. Denote
$m=m_{n+1}$, $x=x_{n+1}$ and
$$
s_{n+1}=b^ma^x.
$$
We first note that (\ref{e:commutative}) implies that

\begin{align}
s_1s_{n+1} &=(b^{m_1}a^{x_1})(b^ma^x)=b^{m_1+m}a^{x+2^{m}x_1}\notag\\
&=s_{n+1}s_1=(b^ma^x)(b^{m_1}a^{x_1})=b^{m+m_1}a^{x_1+2^{m_1}x}\notag
\end{align}
and thus
\begin{align}\label{e:h-new}
(2^{m_1}-1)x=(2^{m}-1)x_1.
\end{align}
Choose  an arbitrary $1 \le j \le n$. Using
$$s_1s_j=s_js_1$$
we get, like in  the proof of Lemma \ref{t:quasi-abelian}, that
$$
x_1 = \frac{2^{m_1}-1}{2^{m_j}-1}x_j.
$$
It follows by (\ref{e:h-new}) that
$$
(2^{m_1}-1)x=(2^{m}-1)\frac{2^{m_1}-1}{2^{m_j}-1}x_j
$$
and since $m_1 \ge 1$, we may conclude that $$(2^{m_j}-1)x=(2^{m}-1)x_j.$$ 
Thus $s_js_{n+1}=s_{n+1}s_j$,
a contradiction.

{\bf Case 2:} $s_1s_2 \neq s_2s_1$. 

We {\it claim} that 
\begin{align}\label{e:three-elements}
{\rm either}\quad s_1s_3 \neq s_2^2\quad {\rm or }\quad s_3s_1 \neq s_2^2.
\end{align}

Indeed, if 
$$
s_1s_3 = s_2^2 \quad {\rm and  } \quad s_3s_1 = s_2^2
$$
then
$$
(b^{m_1}a^{x_1})(b^{m_3}a^{x_3})=(b^{m_2}a^{x_2})^2=(b^{m_3}a^{x_3})(b^{m_1}a^{x_1})
$$
and thus
$$
b^{m_1+m_3}a^{x_3+2^{m_3}x_1}=b^{2m_2}a^{x_2+2^{m_2}x_2}=b^{m_1+m_3}a^{x_1+2^{m_1}x_3}.
$$
It follows that $m_1+m_3=2m_2$ and 
$$x_3 = x_2(2^{m_2}+1)-2^{m_3}x_1,\quad
2^{m_1}x_3=x_2(2^{m_2}+1)-x_1.$$
Thus
$$2^{m_1}x_3=(2^{m_2}+1)x_2-x_1=2^{m_1}(2^{m_2}+1)x_2-2^{m_1+m_3}x_1,
$$
implying that 
$$(2^{m_1}-1)(2^{m_2}+1)x_2=(2^{m_1+m_3}-1)x_1=(2^{2m_2}-1)x_1.
$$
Hence
$$
(2^{m_1}-1)x_2=(2^{m_2}-1)x_1,
$$
and 
$$s_1s_2=b^{m_1+m_2}a^{x_2+2^{m_2}x_1}=b^{m_2+m_1}a^{x_1+2^{m_1}x_2}=s_2s_1,$$ 
a contradiction. The proof of our claim is complete.

Thus
$$|\{ s_1s_3, s_3s_1 \} \setminus \{s_2^2\}|\ge 1.$$
Since
$$\{s_1^2, s_1s_2, s_2s_1\}  \subseteq S^2 \setminus S'^2
$$
and 
$$
\{ s_1s_3, s_3s_1 \} \setminus \{s_2^2\} \subseteq S^2 \setminus S'^2,
$$
it follows by the induction hypothesis for $S'$, that   
$$|S^2| \ge |S'^2|+|\{s_1^2, s_1s_2, s_2s_1, s_1s_3, s_3s_1\} \setminus S'^2| 
\ge (4|S'|-4)+4=4|S|-4. $$

The proof of Lemma \ref{t:particular-case-III} is complete.

\end{proof}

The following lemma is the main step in the proof of Theorem \reft{inverse-general}. 
We use an inductive argument analogous to that used for
the proof of  Lemma 2.2 in  \cite{b:sta-plane} (see also Lemma 3 in \cite{b:sta-dim}).

\begin{lemma}\label{t:case3}
Let $S \subseteq BS^{+}(1,2)$ be a finite set of size $ k=|S| \ge 2.$
Suppose that  
\begin{align}\label{e:partition-of-S}
S=S_0 \cup S_1 \cup ... \cup S_t,
\end{align}
where $ t \ge 1$. If $S$ is a non-abelian set, then 
\begin{align}\label{e:conclusion}
|S^2|\ge 3.5k-4.
\end{align}
\end{lemma}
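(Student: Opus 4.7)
I proceed by induction on $t$. The base case $t = 1$ is precisely Lemma \reft{case2}. For the inductive step, assume $t \ge 2$ and that the conclusion holds for every non-abelian subset of $BS^{+}(1,2)$ occupying at most $t$ cosets. Several configurations are immediately resolved with the stronger bound $|S^{2}| \ge 4k - 5 > 3.5k - 4$:\ if exactly one $k_i$ exceeds $1$ and equals $k_0$ (resp.\ $k_t$), apply Lemma \reft{particular-case-I} (resp.\ Lemma \reft{particular-case-II}); and if every $k_i = 1$, use Lemma \reft{quasi-abelian} when $\langle S \setminus \{s_0\} \rangle$ is abelian, or Lemma \reft{particular-case-III} when $m_0 \ge 1$. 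The remaining possibilities are $(i)$ every $k_i = 1$, $m_0 = 0$, and $\langle S \setminus \{s_0\} \rangle$ non-abelian; $(ii)$ some $1 \le j \le t-1$ has $k_j \ge 2$; and $(iii)$ at least two of the $k_i$ exceed $1$.

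For configurations $(ii)$ and $(iii)$, I truncate by setting $S' = S_0 \cup \cdots \cup S_{t-1}$, which occupies $t$ cosets and satisfies $|S'| = k - k_t$. Provided $S'$ is non-abelian, the inductive hypothesis gives $|S'^{2}| \ge 3.5(k - k_t) - 4$. I then bound $|S^{2}| - |S'^{2}|$ from below by exhibiting new products lying in cosets disjoint from the support of $S'^{2}$. The coset $b^{2 m_t} a^{\Z}$ contains $S_t^{2}$, and since $t \ge 2$ forces $m_t \ge 2$, Corollary \reft{4k-4forS} gives $|S_t^{2}| \ge \max(4 k_t - 4, 1)$; the coset $b^{m_{t-1} + m_t} a^{\Z}$ contains $S_{t-1} S_t \cup S_t S_{t-1}$, and by an argument analogous to the one in the proof of Lemma \reft{particular-case-I} these two same-sized factors cannot coincide when $m_{t-1} \ge 1$, so $|S_{t-1} S_t \cup S_t S_{t-1}| \ge k_{t-1} + k_t$. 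Both cosets are disjoint from $S'^{2}$ (whose $b$-powers are bounded by $2 m_{t-1} < m_{t-1} + m_t < 2 m_t$) and from each other by \refe{UNIQUE_REPR}. Summing, and harvesting extra contributions from $S_i S_t \cup S_t S_i$ for $i < t - 1$ in further fresh cosets when needed, the total new count is at least $3.5\, k_t$, which together with the inductive bound closes the induction.

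The main obstacles are two. First, in the delicate boundary case $k_t = k_{t-1} = 1$, the two basic contributions sum only to $1 + 2 = 3 < 3.5\, k_t = 3.5$, so one must locate an extra new product in some coset $b^{m_i + m_t} a^{\Z}$ with $i < t-1$; such a coset is new unless $m_i + m_t$ is of the form $m_j + m_l$ with $0 \le j, l \le t-1$, and the exceptional arithmetic patterns in which every such coset collides with the support of $S'^{2}$ must be handled by a direct ad-hoc analysis. Second, case $(i)$ and the sub-case in which $S'$ itself is abelian both reduce to highly constrained commutation configurations among singletons in $BS^{+}(1,2)$; these are dispatched by a commutator analysis in the spirit of Lemma \reft{quasi-abelian}, exploiting the identity $ab = ba^{2}$ to show that enough products involving the reinstated extremal block are distinct to yield the required $3.5k - 4$ lower bound.
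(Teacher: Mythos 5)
Your overall skeleton coincides with the paper's: induction on $t$ with Lemma\reft{case2} as the base, peeling off the top coset $S_t$, reducing the inductive step to producing $3.5k_t$ new elements in cosets $b^{2m_t}a^{\Z}$ and $b^{m_{t-1}+m_t}a^{\Z}$ disjoint from the support of $S'^2$, and falling back on Lemmas\reft{particular-case-I},\reft{particular-case-II},\reft{quasi-abelian} and\reft{particular-case-III} for the degenerate configurations. However, there is a genuine gap exactly where you flag the ``delicate boundary case.'' Your key quantitative claim $|S_{t-1}S_t\cup S_tS_{t-1}|\ge k_{t-1}+k_t$ is not justified by the argument of Lemma\reft{particular-case-I}, which requires one of the two factors to have size at least $2$; when $k_{t-1}=k_t=1$ the two singletons may well commute (e.g.\ $s_{t-1}=b^{m_{t-1}}$, $s_t=b^{m_t}$, with the non-commutativity of $S$ living entirely in a lower block), in which case $|S_{t-1}S_t\cup S_tS_{t-1}|=1$ and your count of new elements drops to $2<3.5k_t$. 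Moreover, the ``further fresh cosets'' you hope to harvest from $S_iS_t\cup S_tS_i$ with $i<t-1$ really can collide with $S'^2$: one can have $m_i+m_t=2m_{t-1}$ and even $S_{t-1}^2\subseteq S_iS_t$, so the promised ad-hoc analysis of these arithmetic collisions is not a routine afterthought but the actual content of the case, and you do not supply it.

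The paper avoids this entirely by changing the truncation point in that configuration (its Case 3): when $k_t=k_{t-1}=1$ and the last block of size $\ge 2$ is $S_j$ with $j\le t-2$, it sets $S^*_j=S_0\cup\dots\cup S_j$ (non-abelian by Lemma\reft{nonabe}) and counts, for each $u\in\{j+1,\dots,t\}$, the sets $S_jS_u\cup S_uS_j$ (each of size $\ge k_j+1\ge 3$ by the Lemma\reft{particular-case-I} argument, now legitimately applicable since $k_j\ge 2$) together with the singletons $S_uS_t$; these all lie in cosets that are pairwise distinct and disjoint from $(S^*_j)^2$ for elementary monotonicity reasons ($m_j+m_u>2m_j$ and $m_v+m_t>m_j+m_u$ for $v\ge j+1$, $u<t$), yielding $|S^2|\ge 4k-4-0.5|S^*_j|>3.5k-4$ with no collision analysis needed. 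You should either adopt this shifted truncation or actually carry out the collision analysis you defer; as written, the inductive step is incomplete precisely in the configuration where all of $k_{j+1},\dots,k_t$ equal $1$ for some $j\le t-2$ with $k_j\ge 2$.
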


\begin{proof}

We 
use induction on $t \ge 1$. Observe that when 
$t=1$, Lemma \ref{t:case3} follows from Lemma \ref{t:case2}.

For the inductive step, let $t \ge 2$ be an integer, and assume that 
Lemma \ref{t:case3} holds for any non-abelian finite set 
$T \subseteq BS^{+}(1,2)$ which lies in 
$u$ distinct cosets of $\langle a \rangle = a^{\Z}$, where $2 \le u < t.$

Denote $$S^*=S \setminus S_t,\quad k^*=|S^*|=k-k_t.$$
If $S^*$ generates a non-abelian group, then our inductive 
hypothesis implies that 
$$|(S^*)^2|\ge 3.5k^*-4,$$ and  
it suffices to show that
\begin{align}\label{e:new}
|S_t^2 \cup S_tS_{t-1} \cup S_{t-1}S_t |\ge 3.5k_t,
\end{align}
since inequality (\ref{e:conclusion})  then follows from
\begin{align}\label{e:qed}
|S^2|\ge |(S^*)^2|+|S_t^2 \cup S_tS_{t-1} \cup S_{t-1}S_t |\ge
(3.5k^*-4)+3.5k_t=3.5k-4.
\end{align}

The proof of (\ref{e:new}) will be provided by examining four 
complementary cases.

{\bf Case 1:}  Assume that either $ k_t \ge 2$ or $k_{t-1} \ge 2.$

Recall that $0 \le m_0 < m_1 <...< m_t$ 
and hence $t \ge 2$ implies that $m_t \ge 2$. Thus, 
using either Theorem \reft{inverse-one-coset} or Corollary \ref{t:4k-4forS},
we get 
$$|S_t^2| \ge \max \{4k_t -4, 1\}  \ge 3k_t-2.$$

We shall examine now four subcases.

{\bf i.} If $k_t+2k_{t-1} \ge 6,$  then (\ref{e:new}) is true in view of:
$$|S_t^2 \cup S_tS_{t-1} \cup S_{t-1}S_t |\ge |S_t^2 |+| S_tS_{t-1}|
\ge (3k_t-2)+(k_t+k_{t-1}-1)=4k_t+k_{t-1}-3
\ge 3.5k_t.$$

If there is $0 \le j \le t-1$ such that $k_j=|S_j|\ge 2,$ then
$S^*$ generates a non-abelian group (in view of Lemma \reft{nonabe})
and  we may apply the induction hypothesis. Thus, 
Lemma \ref{t:case3} follows from (\ref{e:new}) and (\ref{e:qed}).

If $k_j=1$ for all $0 \le j \le t-1$, then $k_t \ge 6 - 2k_{t-1}\ge 4$, 
in view of ({\bf i}).
In this case, Lemma \ref{t:case3} follows from Lemma \ref{t:particular-case-II}.

So we may assume that Case ({\bf i}) does not hold and in particular 
$$ 
k_t+2k_{t-1} \le 5.
$$
Hence one of the following cases must hold: ({\bf ii}) $k_t=3, k_{t-1}=1$, 
({\bf iii}) $k_t=2, k_{t-1}=1$ or ({\bf iv}) $k_t=1, k_{t-1}=2.$

{\bf ii.} If $k_t=3$ and $k_{t-1}=1$ , then Corollary \reft{4k-4forS}  
implies $|S_t^2| \ge 4k_t-4$ and therefore inequality (\ref{e:new}) follows from:
$$|S_t^2 \cup S_tS_{t-1} \cup S_{t-1}S_t |\ge |S_t^2 |+| S_tS_{t-1}| 
\ge (4k_t-4)+(k_t+k_{t-1}-1)=5k_t-4>3.5k_t.$$

If there is $0 \le j \le t-1$ such that $k_j=|S_j|\ge 2,$ then
$S^*$ generates a non-abelian group (in view of Lemma \reft{nonabe})
and  we may apply the induction hypothesis. Thus, 
Lemma \ref{t:case3} follows from (\ref{e:new}) and (\ref{e:qed}).

If $k_j=1$ for all $0 \le j \le t-1$, then
Lemma \ref{t:case3} follows from Lemma \ref{t:particular-case-II},
in view of $k_t=3.$

{\bf iii.} If $k_t=2$ and $k_{t-1}=1,$ then we can write
$$S_{t-1}=\{b^ua^x\},\quad S_t = \{b^va^y, b^va^z\},$$
where $1 \le u=m_{t-1}<v=m_t$ and $y<z$ are integers.
Using the identity $a^xb^m=b^ma^{2^m x} $, we get
$$S_{t-1}S_t=b^{u+v}\{a^{2^vx+y},a^{2^vx+z}\}\quad {\rm and}
\quad S_tS_{t-1}=b^{u+v}\{a^{2^uy+x},a^{2^uz+x}\}.$$
Note that $S_{t-1}S_t \neq S_tS_{t-1}$. Indeed, 
if $S_{t-1}S_t=S_tS_{t-1}$, then $2^vx+y=2^uy+x$ and $2^vx+z=2^uz+x$.
Thus $(2^u-1)y=(2^v-1)x=(2^u-1)z$, which contradicts $y<z$, 
in view of $u \ge 1.$
Therefore either Theorem \reft{inverse-one-coset} or Corollary \ref{t:4k-4forS} 
implies that
$$|S_t^2 \cup S_tS_{t-1} \cup S_{t-1}S_t |=
|S_t^2| +  |S_tS_{t-1} \cup S_{t-1}S_t| \ge 
(3k_t-2)+3 = 4+3 =3.5k_t. $$

If there is $0 \le j \le t-1$ such that $k_j=|S_j|\ge 2,$ then
$S^*$ generates a non-abelian group (in view of Lemma \reft{nonabe})
and  we may apply the induction hypothesis. Thus,
Lemma \ref{t:case3} follows from (\ref{e:new}) and (\ref{e:qed}).

If $k_j=1$ for all $0 \le j \le t-1$, then
Lemma \ref{t:case3} follows from Lemma \ref{t:particular-case-II},
in view of $k_t = 2.$

{\bf iv.} If $k_t=1$ and $k_{t-1}=2,$ then we can write
$$S_{t-1} = \{b^ua^y, b^ua^z\}, ~~S_t=\{b^va^x\}, $$
where $1 \le u=m_{t-1}<v=m_t$, $x,y,z$ are integers and $y<z$.
Using the identity $a^xb^m=b^ma^{2^m x} $, we get
$$S_{t-1}S_t=b^{u+v}\{a^{2^vy+x},a^{2^vz+x}\}\quad {\rm and}
\quad S_tS_{t-1}=b^{u+v}\{a^{2^ux+y},a^{2^ux+z}\}.$$
Note that $S_{t-1}S_t \neq S_tS_{t-1}$. Indeed, 
if $S_{t-1}S_t=S_tS_{t-1}$, then $2^vy+x=2^ux+y$ and $2^vz+x=2^ux+z$.
Thus $(2^v-1)y=(2^u-1)x=(2^v-1)z$, which contradicts $y<z$, 
in view of $v \ge 1.$
Therefore, 
$$|S_t^2 \cup S_tS_{t-1} \cup S_{t-1}S_t |=
|S_t^2 |+| S_tS_{t-1} \cup S_{t-1}S_t | \ge 
 1+3 >3.5k_t. $$
Since $k_{t-1}= 2,$  Lemma \reft{nonabe} implies that
$S^*$ generates a non-abelian group 
and  we may apply the induction hypothesis. Thus, 
Lemma \ref{t:case3} follows from (\ref{e:new}) and (\ref{e:qed}).

The proof in {\bf Case 1} is complete. 

{\bf Case 2:}  Assume that $k_t=k_{t-1} =...=k_1=1$ and 
and $k_0\ge 2 $. 

In this case, Lemma \reft{case3} 
follows form Lemma \reft{particular-case-I}. 

{\bf Case 3:}  Assume that $k_t=k_{t-1} =1$ and there is $1 \le j \le t-2$ such that $k_j \ge 2$
and $k_i =1 $ for every $i \in \{j+1,...,t\}.$

Let   

$$S_j=b^{m_j}a^{A_j}=\{b^{m_j}a^{y_1}, ..., b^{m_j}a^{y_{k_j}}\} $$
and let 
$S_i=\{b^{m_i}a^{x_i}\}$
for every $i \in \{j+1,...,t\}.$
Clearly $|S_jS_i |=|S_iS_j|=k_j$ and
using the reasoning in the proof of Lemma \reft{particular-case-I}, 
we get 
\begin{align}\label{e:union}
|S_jS_i \cup S_iS_j|\ge k_j+1
\end{align}
for every $i \in \{j+1,...,t\}.$

Note that $k_j \ge 2 $, so by Lemma \reft{nonabe} the set  
$$S^*_j=S_0 \cup S_1 \cup ... \cup S_j$$
is non-abelian. By applying the inductive hypothesis to $S^*_j$ and in
view of (\ref{e:union}), we
obtain
\begin{align}\label{e:new-II}
|S^2| 
&\ge |S^*_jS^*_j|+\sum_{u=j+1}^{t}|S_jS_u \cup S_uS_j|+\sum_{u=j+1}^{t}|S_uS_t|\\
&\ge (3.5|S^*_j|-4)+(k_j+1)(t-j)+(t-j)\nonumber \\
&=(3.5|S^*_j|-4)+(k_j+2)(t-j)\ge (3.5|S^*_j|-4)+4(t-j)\nonumber \\
&= (3.5|S^*_j|-4)+4(k-|S^*_j|)=4k-4-0.5|S^*_j|>3.5k-4,
\end{align}
as required.

{\bf Case 4:} Assume that $k_i=1$ for every  $0 \le i \le t.$

If the set $S' =S \setminus  \{s_0\}$ is abelian, 
then Lemma \reft{quasi-abelian}
implies that $$|S^2|\ge 4|S|-4 > 3.5k-4,$$
as required. Therefore, we may assume that
$S'=S_1 \cup S_2 \cup ... \cup S_t $
is non-abelian. Since $t\geq 2$, it follows that $k=t+1\geq 3$ and Lemma
\reft{particular-case-III} implies that
\begin{align}
|S'^2|\ge 4|S'|-4.
\end{align}
Moreover,$\{s_0^2,s_0s_1\} \subseteq S^2 \setminus S'^2$.

We distinguish now between two complementary cases.

(a) If $k=|S| \ge 4$, then  
$$
|S^2| \ge |S'^2|+2  \ge 4(k-1)-4+2=4k-6\ge 3.5k-4,
$$ 
as required.
   
(b) If $k=3$, then $S=\{s_0,s_1,s_2\}$, $S'=\{s_1,s_2\}$, $s_1s_2 \neq s_2s_1$, 
$|S'^2|=4$ and
$$
S^2= \{s_0^2,s_0s_1, s_1s_0, s_0s_2, s_2s_0\} \cup S'^2.
$$
We {\it claim} that 
\begin{align}\label{e:three-elements-last}
{\rm either}\quad s_0s_2 \neq s_1^2 \quad {\rm or } \quad s_2s_0 \neq s_1^2.
\end{align}

Indeed, if 
$$
s_0s_2 = s_1^2 \quad {\rm and  } \quad s_2s_0 = s_1^2
$$
then
$$
(b^{m_0}a^{x_0})(b^{m_2}a^{x_2})=(b^{m_1}a^{x_1})^2=(b^{m_2}a^{x_2})(b^{m_0}a^{x_0})
$$
and thus
$$
b^{m_0+m_2}a^{x_2+2^{m_2}x_0}=b^{2m_1}a^{x_1+2^{m_1}x_1}=b^{m_2+m_0}a^{x_0+2^{m_0}x_2}.
$$
It follows that $m_0+m_2=2m_1$ and 
$$2^{m_2}x_0=x_1(2^{m_1}+1)-x_2,\quad x_0=x_1(2^{m_1}+1)-2^{m_0}x_2.$$
Thus 
$$2^{m_2}x_0=x_1(2^{m_1}+1)-x_2=2^{m_2}(2^{m_1}+1)x_1-2^{m_2+m_0}x_2,$$
implying that 
$$
(2^{m_2}-1)(2^{m_1}+1)x_1=(2^{m_0+m_2}-1)x_2=(2^{2m_1}-1)x_2.
$$
Hence
$$
(2^{m_2}-1)x_1=(2^{m_1}-1)x_2
$$
and 
$s_1s_2=s_2s_1$,
a contradiction.

We conclude that
$$
|S^2| \ge  |\{s_0^2,s_0s_1, s_0s_2, s_2s_0\}\setminus S'^2|+|S'^2|\ge 3+4=7 > 3.5|S|-4.
$$

The proof of Lemma \ref{t:case3} is complete.

\end{proof}

{\it Proof of Theorem  \reft{inverse-general}.}

Let $S$ be a finite set satisfying the assumptions of Theorem \reft{inverse-general}.
Inequality $$|S^2| < 3.5k -4$$ and Lemma \reft{case3} imply that 
$$
S=S_0=b^{m_0 }a^{A_0},
$$
where $m_0 \ge 0$. 

The set  $S$ is non-abelian, so $m_0 \ge 1$. If $ m_0 \ge 2 $, then 
Corollary \ref{t:4k-4forS} implies that 
$|S^2 | \ge 4k-4 > 3.5k-4$, which contradicts our hypothesis. Therefore
$$S=S_0=ba^{A}.$$
 
Theorem  \reft{inverse-general} now follows from Theorem \reft{inverse-one-coset}.
 \hfill $\square$

\end{document}